\newtheorem{theorem}{Theorem}[section]
\newtheorem{lemma}[theorem]{Lemma}
\newtheorem{proposition}[theorem]{Proposition}
\newtheorem{corollary}[theorem]{Corollary}
\newtheorem{definition}[theorem]{Definition}
\theoremstyle{definition}
\newtheorem{remark}[theorem]{Remark}
\newtheorem{example}[theorem]{Example}
\newcommand{\Z}{\mathbb{Z}}
\newcommand{\im}{\operatorname{Im}}
\renewcommand{\ker}{\operatorname{Ker}}
\newcommand{\id}{\operatorname{id}}
\newcommand{\Id}{\operatorname{Id}}
\newcommand{\aut}{\operatorname{Aut}}
\newcommand{\soc}{\operatorname{Soc}}
\newcommand{\ord}[1]{\vert #1\vert}
\begin{document}

\title{Extensions, matched products, and simple braces}
\author{David Bachiller}
\date{}
\maketitle

\begin{abstract}
We show how to construct all the extensions of left braces by ideals 
with trivial structure. This is useful to find new examples of left braces. 
But, to do so, we must know the basic blocks for extensions: the left 
braces with no ideals except
the trivial and the total ideal, called simple left braces. 
In this article, we present the first non-trivial examples of finite simple
left braces.
To explicitly construct them, we define the matched product of two left braces, 
which is a useful method to recover a finite left brace from its Sylow subgroups. 
\end{abstract}

{\bf Keywords:} Left braces, extensions, simple braces, matched product, Yang-Baxter equation.

{\bf MSC:} 20F16, 20E22, 20F28, 16T25.

\section{Introduction}

Since its appearance in theoretical physics, the Yang-Baxter equation has motivated
the research of many algebraic structures, like Hopf algebras and quantum groups (see \cite{Kassel}). 
Since the construction of new solutions of this equation is widely open, 
Drinfeld in \cite{Drinfeld} suggested to study some subclasses of solutions with additional properties. 
One class of solutions that can be studied with techniques from group theory
has received a lot of attention recently, the non-degenerate involutive 
set-theoretic solutions (see \cite{CJO}, and the references there). 
Besides its relation with theoretical physics, this class of solutions is important 
for its connections with many branches of algebra, for instance:
semigroups of $I$-type and Bieberbach groups
\cite{GVdB}, bijective $1$-cocycles \cite{ESS}, radical rings
\cite{Rump}, triply factorized groups \cite{sysak}, construction of
semisimple minimal triangular Hopf algebras \cite{EtingofGelaki},
racks, quandles and pointed Hopf algebras \cite{AG},
regular subgroups of the holomorf and Hopf-Galois extensions
\cite{CR,FCC}, and groups of central type \cite{NirBenDavid,BDG}.

Rump in \cite{Rump} introduced 
a new algebraic structure called brace to study this class of solutions.
A left brace is a set $B$ with two binary operations, a sum $+$ and a product $\cdot$, such that
$(B,+)$ is an abelian group, $(B,\cdot)$ is a group, and for any $a,b,c\in B$,
$$
a\cdot (b+c)+a=a\cdot b+a\cdot c.
$$
To any non-degenerate involutive set-theoretic solution $(X,r)$, we can associate a left brace
$\mathcal{G}(X,r)$, called the permutation group of the solution. 
Conversely, for any left brace $B$, there exists a solution $(X,r)$ of this class 
such that $\mathcal{G}(X,r)\cong B$. 
But moreover, using the recent results of \cite{BCJ}, given a left brace $B$, we can recover 
from $B$ all the solutions $(X,r)$ such that $\mathcal{G}(X,r)\cong B$ as left braces. 
Hence for any new left brace that we obtain, we are able to construct an infinite number 
of new solutions of the Yang-Baxter equation. And, theoretically, 
a classification of the left braces implies the possibility to construct all the
non-degenerate involutive set-theoretic solutions of the Yang-Baxter equation. 

To begin to study the classification problem, we need new methods to construct left braces. 
An interesting method might be extensions of left braces: given two left braces $I$ and $H$, determine all the 
possible left braces $B$ such that $0\to I\to B\to H\to 0$ is exact; i.e. such that there exists 
an ideal $J$ of $B$ isomorphic to $I$ satisfying $B/J\cong H$. 
Until now, there has been only one construction of this type, given for the first time 
in \cite[Corollary D]{NirBenDavid} and translated into brace language in 
\cite[Theorem 2.1]{B}, 
but it only works when $I$ is contained in an specific ideal of $B$, the socle of $B$. 
In this article, we develop the theory of extensions in the case that $I$ is a 
trivial brace; i.e. $y_1\cdot y_2=y_1+y_2$ for any $y_1,y_2\in I$. The reason to restrict 
to this case is analogous to the reason why most books only define extensions of groups 
with abelian kernel: because, to construct the extension $B$, in the non-trivial case there are 
some technical troubles when we define the actions of $H$ on $I$. 
Note that the socle of $B$ is a trivial brace, and that there are left braces with trivial socle,
 so the results of our article both contain and generalize
\cite[Corollary D]{NirBenDavid}.

Besides methods to construct new left braces from smaller ones, we need to
find the initial blocks to begin the construction.  
From the point of view of extensions, this initial blocks are the simple left braces: 
left braces with no ideals except the trivial and the total ideals. 
Until now, the only known examples of finite simple left braces were braces 
with additive group equal to $\Z/(p)$ and with 
product defined by $x\cdot y:=x+y$, for any $x,y\in \Z/(p)$. 
In this article, we present the first non-trivial examples of 
finite simple left braces. 

To find simple left braces, we develop another technique to construct new left braces 
from two given braces, called matched product of left braces: 
given $L_1$ and $L_2$ two left braces, we give sufficient conditions 
to construct a left brace $B$ such that $L_1$ and $L_2$ are 
left ideals inside $B$, and $(B,+)\cong (L_1,+)\oplus(L_2,+)$. 
Moreover, any left brace with two left ideals 
satisfying these properties is of this form. 
 This is analogous to the matched product of two subgroups introduced in group theory, 
 as it was defined in \cite{Mackey,Takeuchi2}.
  It is possible to define matched products of many algebraic structures and, in particular, 
  matched products of bialgebras are a key step to construct 
the Drinfeld double of a Hopf algebra, a construction related to the Yang-Baxter equation; 
see for example \cite[Chapter~IX]{Kassel}. 
  In particular, in the finite case, the Sylow subgroups 
of $(B,+)$ are left ideals with trivial intersection and whose sum is all $B$, so we can recover $B$ from 
these left ideals of order a power of a prime, under the conditions of Theorem \ref{product}. 

This technique is very useful because in \cite[Corollary after Proposition 8]{Rump} it is proved that 
a left brace with order $p^n$, where $p$ is a prime number and $n\geq 2$, is never a simple left brace. 
Thus any other finite simple left brace has at least Sylow subgroups for two different primes,
and we can put them together with a matched product. We make explicit this idea in the 
two last sections of this article.

The paper is organized as follows: in Section 2, we give the necessary preliminary definitions and 
results about left braces to make the paper self-contained. 
Next, in Section 3, given a trivial brace $I$ and a left brace $H$, we give a method to 
construct all the extension $0\to I\to B\to H\to 1$ in terms of $2$-cocycles, analogous to 
the method in group theory.
In Section 4, we define the notion of matched product of braces, which in particular provides us 
with a method to glue together two Sylow subgroups for different primes. 
With this idea in mind, 
in Section 5 we recall a family of left braces with order a power of a prime, 
first appearing in \cite{Hegedus}. The special property of this family is that 
all its braces have zero socle. 
In Section 6, using the left braces from Section 5 and the matched product 
of Section 4, we give sufficient conditions to construct examples of simple left brace of 
order $p_1\cdot p_2^{n+1}$, where $p_1$ and $p_2$ are different prime numbers such that 
$p_2\mid (p_1-1)$, and $n\geq 1$. The Sylow $p_2$-subgroup of all these examples is always one of the 
braces of Section 5.
Note that all these examples have order of the form $p^a\cdot q^b$, and that we even obtain examples of odd order; 
so although finite left braces can be defined in group theoretical terms, their algebraic structure seems to be very 
different from that of finite groups. 
Finally, in Section 7, we make use of the theory of the previous section to give the 
first concrete examples of non-trivial simple left braces, by explicitly constructing 
all the needed elements of the theorems of Section 6. We obtain the existence of a simple left brace 
of order $p_1\cdot p_2^{k(p_1-1)+1}$ for any positive integer $k$, and any pair of different primes 
$p_1$ and $p_2$ with $p_2\mid (p_1-1)$.

\section{Initial results about left braces}

\begin{definition}
A left brace is a set $B$ with two binary operations, a sum $+$ and a product $\cdot$, such that
$(B,+)$ is an abelian group, $(B,\cdot)$ is a group, and for any $a,b,c\in B$,
$$
a\cdot (b+c)+a=a\cdot b+a\cdot c.
$$
A right brace is defined similarly, but changing the last property by $(b+c)\cdot a+a=b\cdot a+c\cdot a.$
A left and right brace is called a two-sided brace.
\end{definition}

\begin{remark}
In a left brace, the additive and the multiplicative neutral elements coincide, since
$$
1\cdot 0+1=1\cdot(0+0)+1=1\cdot 0+1\cdot 0,
$$
implying $1=1\cdot 0=0$.
\end{remark}

We now recall the definition of lambda maps of a left brace, which play a major role in 
next sections. 

\begin{lemma}(\cite[Lemma 1]{CJO})
Let $B$ be a left brace. For any $a\in B$, define a map $\lambda_a: B\to B$
given by $\lambda_a(b)=a\cdot b-a$, for any $b\in B$. Then, $\lambda_a$ 
is an automorphism of $(B,+)$ and the map $\lambda: (B,\cdot)\to\aut(B,+)$,
$a\mapsto \lambda_a$,
is a group homomorphism.
\end{lemma}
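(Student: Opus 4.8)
The plan is to verify three things about $\lambda_a$: that it maps $B$ into $B$ (trivial, since $B$ is closed under both operations), that it is additive, and that it is bijective; then separately to check that $a \mapsto \lambda_a$ is a homomorphism. I would begin with additivity, which is nothing more than the defining brace axiom rearranged. Indeed, for any $b,c \in B$,
$$
\lambda_a(b+c) = a\cdot(b+c) - a = (a\cdot b + a\cdot c) - a\cdot 0 = (a\cdot b - a) + (a\cdot c - a) = \lambda_a(b) + \lambda_a(c),
$$
where I have used the brace relation $a\cdot(b+c) + a = a\cdot b + a\cdot c$ and the fact (from the Remark) that $a\cdot 0 = a$, so subtracting $a$ once on each side leaves the two copies of $a$ correctly distributed. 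So $\lambda_a$ is an endomorphism of $(B,+)$.

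Next I would establish bijectivity. The cleanest route is to produce a two-sided inverse explicitly, namely $\lambda_{a^{-1}}$, where $a^{-1}$ is the inverse of $a$ in $(B,\cdot)$; proving $\lambda_a \circ \lambda_{a^{-1}} = \id = \lambda_{a^{-1}} \circ \lambda_a$ is most efficiently done as a consequence of the homomorphism property, so I would treat these together. Concretely, I expect to show first that $\lambda_a(\lambda_c(b)) = \lambda_{a\cdot c}(b)$ for all $a,c,b \in B$, which is simultaneously the multiplicativity of $\lambda$ as a map $(B,\cdot)\to \sym(B)$ and the key to invertibility. Computing directly,
$$
\lambda_a(\lambda_c(b)) = \lambda_a(c\cdot b - c) = \lambda_a(c\cdot b) - \lambda_a(c),
$$
using additivity just proved, and expanding each term via the definition gives $a\cdot(c\cdot b) - a - (a\cdot c - a) = (a\cdot c)\cdot b - a\cdot c = \lambda_{a\cdot c}(b)$, where associativity of $(B,\cdot)$ collapses $a\cdot(c\cdot b)$ to $(a\cdot c)\cdot b$. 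This single identity yields everything: taking $c = a^{-1}$ gives $\lambda_a \circ \lambda_{a^{-1}} = \lambda_{a\cdot a^{-1}} = \lambda_1$, and one checks $\lambda_1 = \id$ since $\lambda_1(b) = 1\cdot b - 1 = b - 0 = b$. Symmetrically $\lambda_{a^{-1}}\circ\lambda_a = \id$, so each $\lambda_a$ is bijective with inverse $\lambda_{a^{-1}}$; being a bijective additive endomorphism, it is an automorphism of $(B,+)$, and the identity $\lambda_{a\cdot c} = \lambda_a \circ \lambda_c$ says precisely that $\lambda$ is a group homomorphism into $\aut(B,+)$.

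I do not anticipate a genuine obstacle here, as every step reduces to the brace axiom together with associativity and the already-established fact $0 = 1$; the only point requiring care is bookkeeping of the additive inverses when expanding $\lambda_a$ on differences, which is why I would prove additivity first and reuse it throughout rather than re-deriving cancellations each time.
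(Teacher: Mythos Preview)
The paper does not supply its own proof of this lemma; it merely cites it. Your argument is the standard direct verification and is conceptually sound: additivity from the brace axiom, then the composition identity $\lambda_a\circ\lambda_c=\lambda_{ac}$ from associativity of the product, with bijectivity falling out by specializing $c=a^{-1}$ and using $\lambda_1=\id$.

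There is, however, a bookkeeping slip in your displayed chain for additivity. The middle expression $(a\cdot b + a\cdot c) - a\cdot 0$ equals $a\cdot b + a\cdot c - a$ (since $a\cdot 0 = a$), whereas both its neighbours---$a\cdot(b+c)-a$ on the left and $(a\cdot b - a)+(a\cdot c - a)$ on the right---equal $a\cdot b + a\cdot c - 2a$. The brace axiom $a\cdot(b+c)+a=a\cdot b+a\cdot c$ already absorbs one copy of $a$; the $-a$ coming from the definition of $\lambda_a$ is a second, separate subtraction. Replace the middle term by $(a\cdot b + a\cdot c) - 2a$ (or simply omit it) and the chain is correct. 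The conclusion and the rest of the proof are unaffected.
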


We recall next some concepts in brace theory.

\begin{definition}
A \emph{sub-brace} of $B$ is a subset closed by sum and multiplication of its elements. 
A \emph{left ideal} is a subgroup $L$ of $(B,\cdot)$ such that $\lambda_b(x)\in L$ for any $b\in B$ and $x\in L$.
An \emph{ideal} is a normal subgroup $I$ of $(B,\cdot)$ such that $\lambda_b(x)\in I$ for any $b\in B$ and $x\in I$.

A \emph{simple left brace} is a non-zero left brace $B$ such that the only ideals of $B$ are $0$ and $B$.
\end{definition}

\begin{definition}
We say that a left brace is trivial if $a\cdot b=a+b$ for any $a$ and $b$
elements of that brace.
\end{definition}

Given an abelian group, we can define a structure of left
brace over it through the lambda map, as it is explained in the next lemma.
Sometimes it is easier to prove that some set has a brace structure using this lemma. 

\begin{lemma}\label{defLambda}
Let $(A,+)$ be an abelian group. Let $\lambda:A\to \aut(A,+)$, $a\mapsto \lambda_a$, be a map such that
$\lambda_a\circ\lambda_b=\lambda_{a+\lambda_a(b)}$, for every $a,b\in A$.
Then, $A$ with the product $a\cdot b:=a+\lambda_a(b)$ is a left brace.

Two of these braces, defined over the same abelian group $A$ through lambda maps $\lambda$ and $\lambda'$ resp., are isomorphic if and only if
$\lambda'_{F(a)}=F\lambda_a F^{-1}$ for some automorphism $F$ of $A$.
\end{lemma}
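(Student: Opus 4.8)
The plan is to prove the statement in two stages: first that the prescribed product turns $A$ into a left brace, and then the isomorphism criterion. Since the additive group is given, the work in the first stage is to check that $(A,\cdot)$ is a group and that the compatibility axiom holds. The crucial computation is associativity. Expanding directly,
$$(a\cdot b)\cdot c = (a+\lambda_a(b)) + \lambda_{a+\lambda_a(b)}(c),$$
and here I would substitute the hypothesis $\lambda_{a+\lambda_a(b)}=\lambda_a\circ\lambda_b$ to rewrite the last term as $\lambda_a(\lambda_b(c))$; using that $\lambda_a$ is additive, this collapses to $a+\lambda_a(b+\lambda_b(c))=a\cdot(b\cdot c)$. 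So the cocycle-type hypothesis is exactly what makes $\cdot$ associative, and I expect this to be the heart of the argument.

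For the identity and inverses, setting $a=b=0$ in the hypothesis and using $\lambda_0(0)=0$ gives $\lambda_0\circ\lambda_0=\lambda_0$; since each $\lambda_a$ is invertible, $\lambda_0=\id$. Then $0\cdot a=\lambda_0(a)=a$ and $a\cdot 0=a+\lambda_a(0)=a$, so $0$ is a two-sided identity. For inverses, the element $b=\lambda_a^{-1}(-a)$ satisfies $a\cdot b=a+\lambda_a(\lambda_a^{-1}(-a))=0$, giving a right inverse to every element. In a monoid this upgrades to a genuine inverse: if $a\cdot b=0$ and $b\cdot c=0$, then $a=a\cdot(b\cdot c)=(a\cdot b)\cdot c=c$, whence $b\cdot a=b\cdot c=0$. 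Thus $(A,\cdot)$ is a group. The brace compatibility then follows from additivity of $\lambda_a$, since both $a\cdot(b+c)+a$ and $a\cdot b+a\cdot c$ expand to $2a+\lambda_a(b)+\lambda_a(c)$; this completes the construction.

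For the isomorphism criterion, write $a\cdot b=a+\lambda_a(b)$ and $a\ast b=a+\lambda'_a(b)$ for the two products. A brace isomorphism is precisely an additive automorphism $F$ that also respects the products. Assuming $F$ respects products and computing $F(a\cdot b)=F(a)+F(\lambda_a(b))$ on one side and $F(a)\ast F(b)=F(a)+\lambda'_{F(a)}(F(b))$ on the other, cancellation of $F(a)$ gives $F\circ\lambda_a=\lambda'_{F(a)}\circ F$, i.e.\ $\lambda'_{F(a)}=F\lambda_a F^{-1}$; this is the forward direction. The same chain of equalities read backwards shows that any additive automorphism satisfying $\lambda'_{F(a)}=F\lambda_a F^{-1}$ automatically respects the products, giving the converse.

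I expect the isomorphism part to be routine once the first stage is in place; the one point to state carefully is that a map between these braces is an isomorphism exactly when it is simultaneously an additive and a multiplicative isomorphism. The genuine obstacle is the associativity computation, where the precise form of the hypothesis on $\lambda$ must be used, and the small monoid-theoretic lemma needed to promote right inverses to two-sided inverses.
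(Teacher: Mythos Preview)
Your proof is correct and follows essentially the same path as the paper's: verify $\lambda_0=\id$, check that $0$ is a two-sided identity, produce inverses, derive associativity from the hypothesis $\lambda_a\circ\lambda_b=\lambda_{a+\lambda_a(b)}$, and finish with the brace compatibility via additivity of $\lambda_a$. The only notable difference is in the treatment of inverses: the paper first derives the auxiliary identity $\lambda_a^{-1}=\lambda_{-\lambda_a^{-1}(a)}$ (by substituting $b=-\lambda_a^{-1}(a)$ into the hypothesis) and then checks directly that $-\lambda_a^{-1}(a)$ is a two-sided inverse, whereas you use the standard monoid argument to promote right inverses to two-sided ones. Both are fine; your route is slightly more economical. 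You also supply a full argument for the isomorphism criterion, which the paper simply omits.
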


\begin{proof}
It is easy to check that these conditions are satisfied in any left brace.
Conversely, given a map with these properties, define the product $a\cdot b:=a+\lambda_a(b)$.
We shall check
all the properties of a left brace.
Observe first that the condition $\lambda_a\circ\lambda_b=\lambda_{a+\lambda_a(b)}$ with $a=b=0$ implies that $\lambda_0=\id$.
Then $0$ is the neutral element of the product, because 
$a\cdot 0=a+\lambda_a(0)=a+0=a,$ and $0\cdot a=0+\lambda_0(a)=a.$
Moreover, note that the condition $\lambda_a\circ\lambda_b=\lambda_{a+\lambda_a(b)}$
with $b=-\lambda^{-1}_a(a)$ implies that $\lambda_a^{-1}=\lambda_{-\lambda_a^{-1}(a)}$.
Then any element has an inverse element, which is 
$a^{-1}:=-\lambda^{-1}_a(a)$, since $a\cdot (-\lambda^{-1}_a(a))=a+\lambda_a(-\lambda_a^{-1}(a))=a-a=0,$ and 
$(-\lambda^{-1}_a(a))\cdot a=-\lambda_a^{-1}(a)+\lambda_{-\lambda_a^{-1}(a)}(a)=-\lambda_a^{-1}(a)+\lambda^{-1}_{a}(a)=0.$

To check the associativity property, on one side,
$$a\cdot (b\cdot c)=a+\lambda_a(b+\lambda_b(c))=a+\lambda_a(b)+\lambda_a(\lambda_b(c)),$$
and, on the other,
 $$(a\cdot b)\cdot c=(a+\lambda_a(b))\cdot c=a+\lambda_a(b)+\lambda_{a+\lambda_a(b)}(c)=a+\lambda_a(b)+(\lambda_a\circ\lambda_b)(c).$$
Finally, the brace property is satisfied:
$$a\cdot(b+c)+a=a+\lambda_a(b+c)+a=a+\lambda_a(b)+\lambda_a(c)+a=a\cdot b+a\cdot c.$$
\end{proof}

\section{Extensions of left braces}\label{extensions}

Now we define a notion of extensions of left braces, mimicking the extensions of group theory. 
Take $B$ a left brace, $I$ an ideal, and $H=B/I$.  
Since the natural projection $\pi: B\to H$ is surjective, we can choose a map $s: H\to B$ such that 
$\pi\circ s=\id$ and $s(0)=0$ (i.e. we choose a set of coset representatives of $(I,+)$ in $(B,+)$). 
Then any element of $B$ can be uniquely written as $s(h)+y$, where $h\in H$ and $y\in I$. The sum of two elements of 
this form is determined by $(s(h_1)+y_1)+(s(h_2)+y_2)=s(h_1+h_2)+(s(h_1)+s(h_2)-s(h_1+h_2)+y_1+y_2)$. 
Note that $s(h_1)+s(h_2)-s(h_1+h_2)\in I$, so we define a map $\beta: H\times H\to I$, 
$\beta(h_1,h_2)=s(h_1)+s(h_2)-s(h_1+h_2).$

On the other hand, the product of two elements is determined by 
\begin{align*}
(s(h_1)+y_1) (s(h_2)+y_2)&=s(h_1)\cdot\lambda^{-1}_{s(h_1)}(y_1)\cdot s(h_2)\cdot\lambda^{-1}_{s(h_2)}(y_2)\\
&=s(h_1)\cdot s(h_2)\cdot s(h_2)^{-1}\cdot\lambda^{-1}_{s(h_1)}(y_1)\cdot s(h_2)\cdot\lambda^{-1}_{s(h_2)}(y_2)\\
&=s(h_1\cdot h_2)\cdot[s(h_1\cdot h_2)^{-1}\cdot s(h_1)\cdot s(h_2)\\
&\quad\cdot s(h_2)^{-1}\cdot\lambda^{-1}_{s(h_1)}(y_1)\cdot s(h_2)\cdot\lambda^{-1}_{s(h_2)}(y_2)]\\
&=s(h_1\cdot h_2)+\lambda_{s(h_1\cdot h_2)}[s(h_1\cdot h_2)^{-1}\cdot s(h_1)\cdot s(h_2)\\
&\quad\cdot s(h_2)^{-1}\cdot\lambda^{-1}_{s(h_1)}(y_1)\cdot s(h_2)\cdot\lambda^{-1}_{s(h_2)}(y_2)].
\end{align*}
Observe that $s(h_1\cdot h_2)^{-1}\cdot s(h_1)\cdot s(h_2)\in I$, so we define a map 
$\tau: H\times H\to I$ by $\tau(h_1,h_2)=s(h_1\cdot h_2)^{-1}\cdot s(h_1)\cdot s(h_2)$. 

Since $I$ is a left ideal, we can restrict the lambda morphism $\lambda: (B,\cdot)\to\aut(B,+)$ to a morphism 
$\lambda_{I}: (B,\cdot)\to\aut(I,+)$.
If we want that $\lambda_I$ determines an action $B/I\to\aut(I,+)$, we must 
assume that $I\subseteq \ker(\lambda_I)$. This means that 
$y_1\cdot y_2=y_1+\lambda_{y_1}(y_2)=y_1+y_2$ for any $y_1, y_2\in I$; i.e. $I$ must be a trivial brace.

Hence, to simplify things, we will assume from now on that $I$ is a trivial brace. 
As a consequence, $(I,\cdot)$ becomes abelian, and then 
$\sigma_{x}(y):=s(x)^{-1}\cdot y\cdot s(x)$ becomes a right action, since 
\begin{align*}
\sigma_{x_1}(\sigma_{x_2}(y))&=s(x_1)^{-1}(s(x_2)^{-1}ys(x_2))s(x_1)\\
&=\tau(x_2,x_1)^{-1}s(x_2x_1)^{-1}ys(x_2x_1)\tau(x_2,x_1)\\
&=s(x_2x_1)^{-1}ys(x_2x_1)\tau(x_2,x_1)^{-1}\tau(x_2,x_1)\\
&=s(x_2x_1)^{-1}ys(x_2x_1)=\sigma_{x_2x_1}(y),
\end{align*}
 using in the third equality
that $I$ is abelian. 
Moreover, $\lambda_{s(h_1)}\circ\lambda_{s(h_2)}=\lambda_{s(h_1)\cdot s(h_2)}=
\lambda_{s(h_1\cdot h_2)\tau(h_1,h_2)}=\lambda_{s(h_1\cdot h_2)}$ because 
$ \tau(h_1,h_2)\in I$, so $\nu_h:=\lambda_{s(h)}$ becomes a left action.

The fact that the sum must define a structure of abelian group, and the product, a structure of group,
 implies some properties for 
$\tau$ and $\beta$, as in the following definition.

\begin{definition}
Let $G$ be a group, let $M$ be an abelian group, and let $\sigma: G\to\aut(M)$ be a right action. 
We say that a map $\tau: G\times G\to M$ is a 2-cocycle with respect to $\sigma$ 
if it satisfies 
$\sigma_{g_3}(\tau(g_1,g_2))+\tau(g_1\cdot g_2,g_3)=\tau(g_1,g_2\cdot g_3)+\tau(g_2,g_3)$,
and $\tau(g,1)=\tau(1,g)=0$.

Let $A$ and $M$ be two abelian groups. We say that a map 
$\beta: A\times A\to M$ is an additive $2$-cocycle if it satisfies
$\beta(a_2,a_3)-\beta(a_1+ a_2,a_3)+\beta(a_1,a_2+ a_3)-\beta(a_1,a_2)=0,$ 
$\beta(a_1,a_2)=\beta(a_2,a_1),$ and 
$\beta(a,0)=\beta(0,a)=0$.
\end{definition}

Now we check that our $\tau(h_1,h_2):=s(h_1\cdot h_2)^{-1}\cdot s(h_1)\cdot s(h_2)$, and our 
$\beta(h_1,h_2):=s(h_1)+s(h_2)-s(h_1+h_2)$ are $2$-cocycles. 
Since the sum in $B$ is associative, $s(h_1)+(s(h_2)+s(h_3))=(s(h_1)+s(h_2))+s(h_3)$. Using that
$s(h+h')+\beta(h,h')=s(h)+s(h')$, the left hand side of this equality becomes
\begin{align*}
s(h_1)+(s(h_2)+s(h_3))&=s(h_1)+(s(h_2+h_3)+\beta(h_2,h_3))\\
&=s(h_1+h_2+h_3)+\beta(h_1,h_2+h_3)+\beta(h_2,h_3),
\end{align*}
and the right hand side becomes
\begin{align*}
(s(h_1)+s(h_2))+s(h_3)&=(s(h_1+h_2)+\beta(h_1,h_2))+s(h_3)\\
&=s(h_1+h_2)+s(h_3)+\beta(h_1,h_2)\\
&=s(h_1+h_2+h_3)+\beta(h_1+h_2,h_3)+\beta(h_1,h_2),
\end{align*}
obtaining the $2$-cocycle relation for $\beta$.
The fact that the sum is commutative implies $\beta(h_1,h_2)=\beta(h_2,h_1)$, and $s(0)=0$ implies that 
$\beta(0,h)=\beta(h,0)=0$.

Next, since the product in $B$ is associative, $s(h_1)\cdot(s(h_2)\cdot s(h_3))=(s(h_1)\cdot s(h_2))\cdot s(h_3)$. Using that
$s(h\cdot h')\cdot \tau(h,h')=s(h)\cdot s(h')$, the left-hand side of this equality becomes
\begin{align*}
s(h_1)\cdot(s(h_2)\cdot s(h_3))&=s(h_1)\cdot (s(h_2\cdot h_3)\cdot\tau(h_2,h_3))\\
&=s(h_1\cdot h_2\cdot h_3)\cdot\tau(h_1,h_2\cdot h_3)\cdot\tau(h_2,h_3),
\end{align*}
and the right-hand side of the equality is equivalent to
\begin{align*}
(s(h_1)\cdot s(h_2))\cdot s(h_3)&=(s(h_1\cdot h_2)\cdot\tau(h_1,h_2))\cdot s(h_3)\\
&=s(h_1\cdot h_2)\cdot s(h_3)(s(h_3)^{-1}\beta(h_1,h_2)s(h_3))\\
&=s(h_1\cdot h_2\cdot h_3)\tau(h_1\cdot h_2,h_3)\sigma_{h_3}(\tau(h_1,h_2)),
\end{align*}
obtaining thus the $2$-cocycle condition for $\tau$.
The fact that $\tau(1,h)=\tau(h,1)=0$ comes from the fact that $s(0)=0$, and that $1=0$ in a left brace.

Moreover, the product and the sum must be related by the left brace property, and this implies a property 
relating $\tau$ and $\beta$. 

\begin{definition}\label{2cocycle}
Let $H$ be a left brace, and let $I$ be an abelian group. We think $I$ as a  
trivial brace. Let $\sigma: (H,\cdot) \to\aut(I,+)$ be a right action, and let
$\nu: (H,\cdot)\to\aut(I,+)$ be a left action. We say that $(H,I,\sigma,\nu,\tau,\beta)$ is an extension 
if $\tau: (H,\cdot)\times (H,\cdot)\to (I,+)$ is a $2$-cocycle with respect to $\sigma$, 
$\beta: (H,+)\times (H,+)\to (I,+)$ is an additive $2$-cocycle, the two actions are related by
\begin{align}\label{propact}
\nu_{h_1+h_2}(\sigma_{h_1+h_2}(y))+y=\nu_{h_1}(\sigma_{h_1}(y))+\nu_{h_2}(\sigma_{h_2}(y)),
\end{align}
and $\tau$ and $\beta$ are related by
\begin{align}\label{propcocycle}
\nu_{h_1(h_2+h_3)}(\tau(h_1,h_2+h_3))+\beta(h_1(h_2+h_3),h_1)+\nu_{h_1}(\beta(h_2,h_3))\nonumber\\
=\nu_{h_1 h_2}(\tau(h_1,h_2))+\nu_{h_1h_3}(\tau(h_1,h_3))+\beta(h_1h_2,h_1h_3),
\end{align}
for any $h_1,h_2,h_3\in H$ and for any $y\in I$.
\end{definition}

Note that, in our case,
\begin{align*}
\nu_{h_1+h_2}(\sigma_{h_1+h_2}(y))+y&=\lambda_{s(h_1+h_2)}(s(h_1+h_2)^{-1}ys(h_1+h_2))+y\\
&=s(h_1+h_2)s(h_1+h_2)^{-1}ys(h_1+h_2)-s(h_1+h_2)+y\\
&=ys(h_1+h_2)-s(h_1+h_2)+y\\
&=y[s(h_1)+s(h_2)-\beta(h_1,h_2)]-s(h_1)-s(h_2)\\
&\quad +\beta(h_1,h_2)+y\\
&=ys(h_1)+y[s(h_2)-\beta(h_1,h_2)]-s(h_1)-s(h_2)\\
&\quad +\beta(h_1,h_2)\\
&=ys(h_1)-s(h_1)+ys(h_2)-y\beta(h_1,h_2)+y-s(h_2)\\
&\quad +\beta(h_1,h_2)\\
&=ys(h_1)-s(h_1)+ys(h_2)-s(h_2)\\
&=\nu_{h_1}(\sigma_{h_1}(y))+\nu_{h_2}(\sigma_{h_2}(y))
\end{align*}
using in one of the last steps that $y\beta(h_1,h_2)=y+\beta(h_1,h_2)$ because 
$I$ has trivial structure.

Since $B$ is a left brace, for any $h_1,h_2,h_3\in H$, $s(h_1)[s(h_2)+s(h_3)]+s(h_1)=s(h_1)s(h_2)+s(h_1)s(h_3)$. 
The left side of this equality can be rewritten using $\tau$ and $\beta$ as
\begin{align*}
s(h_1)[s(h_2)+s(h_3)]+s(h_1)&=s(h_1)[s(h_2+h_3)+\beta(h_2,h_3)]+s(h_1)\\
&=s(h_1)s(h_2+h_3)+s(h_1)\beta(h_2,h_3)\\
&=s(h_1(h_2+h_3))\tau(h_1,h_2+h_3)+s(h_1)\beta(h_2,h_3)\\
&=s(h_1(h_2+h_3))+\lambda_{s(h_1(h_2+h_3))}(\tau(h_1,h_2+h_3))\\
&\quad +s(h_1)+\lambda_{s(h_1)}(\beta(h_2,h_3))\\
&=s(h_1(h_2+h_3))+s(h_1)\\
&\quad +\nu_{h_1(h_2+h_3)}(\tau(h_1,h_2+h_3))+\nu_{h_1}(\beta(h_2,h_3))\\
&=s(h_1(h_2+h_3)+h_1)+\beta(h_1(h_2+h_3),h_1)\\
&\quad +\nu_{h_1(h_2+h_3)}(\tau(h_1,h_2+h_3))+\nu_{h_1}(\beta(h_2,h_3))\\
&=s(h_1h_2+h_1h_3)+\beta(h_1(h_2+h_3),h_1)\\
&\quad  +\nu_{h_1(h_2+h_3)}(\tau(h_1,h_2+h_3))+\nu_{h_1}(\beta(h_2,h_3))
\end{align*}
The other side becomes
\begin{align*}
s(h_1)s(h_2)+s(h_1)s(h_3)&=s(h_1h_2)\tau(h_1,h_2)+s(h_1h_3)\tau(h_1,h_3)\\
&=s(h_1h_2)+\lambda_{s(h_1h_2)}(\tau(h_1,h_2))+s(h_1h_3)\\
&\quad +\lambda_{s(h_1h_3)}(\tau(h_1,h_3))\\
&=s(h_1h_2)+s(h_1h_3)+\nu_{h_1h_2}(\tau(h_1,h_2))\\
&\quad +\nu_{h_1h_3}(\tau(h_1,h_3))\\
&=s(h_1h_2+h_1h_3)+\beta(h_1h_2,h_1h_3)+\nu_{h_1h_2}(\tau(h_1,h_2))\\
&\quad +\nu_{h_1h_3}(\tau(h_1,h_3)),
\end{align*}
and then we obtain 
\begin{align*}
\nu_{h_1(h_2+h_3)}(\tau(h_1,h_2+h_3))+\beta(h_1(h_2+h_3),h_1)+\nu_{h_1}(\beta(h_2,h_3))\\
=\nu_{h_1 h_2}(\tau(h_1,h_2))+\nu_{h_1h_3}(\tau(h_1,h_3))+\beta(h_1h_2,h_1h_3),
\end{align*}
as we wanted.

So we have seen that, given an exact sequence $0\to I\to B\to H\to 0$ of 
left braces with $I$ a trivial brace, we are able to 
construct an extension $(H,I,\sigma,\nu,\tau,\beta)$. 
In the next theorem we show that, conversely, that is all we need to construct all 
the braces $B$ containing $I$ as an ideal, and such that $B/I\cong H$.

\begin{theorem}\label{ThmExt}
Let $H$ be a left brace, and let $I$ be a trivial brace.
Assume that we have given an extension $(H,I,\sigma,\nu,\tau,\beta)$. 
Then, $H\times I$ is a left brace with sum 
$$
(h_1,y_1)+(h_2,y_2):=(h_1+h_2,y_1+y_2+\beta(h_1,h_2)),
$$
and product
$$
(h_1,y_1)\cdot(h_2,y_2):=(h_1\cdot h_2,~\nu_{h_1\cdot h_2}(\sigma_{h_2}(\nu^{-1}_{h_1}(y_1)))+\nu_{h_1}(y_2)+
\nu_{h_1\cdot h_2}(\tau(h_1,h_2))).
$$
Moreover, $\{0\}\times I$ is an ideal of $H\times I$ isomorphic to $I$ such that $(H\times I)/(\{1\}\times I)\cong H$. 

Conversely, if $B$ is a left brace, $I$ is an ideal of $B$ with trivial brace structure, and $H=B/I$, then 
$B$ is isomorphic to a brace of the form described above, constructed from $I$ and $H$.
\end{theorem}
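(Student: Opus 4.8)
The plan is to prove the two directions separately, and within the forward direction to dispose of the additive and multiplicative group axioms by reducing each to a classical extension, so that only the left distributive law remains as genuine work.

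First I would check that $(H\times I,+)$ is an abelian group. This is the standard symmetric (abelian) extension of $(H,+)$ by $(I,+)$ attached to $\beta$: associativity is exactly the additive $2$-cocycle identity of Definition \ref{2cocycle}, commutativity is the symmetry $\beta(h_1,h_2)=\beta(h_2,h_1)$, the normalization $\beta(0,h)=\beta(h,0)=0$ makes $(0,0)$ neutral, and inverses are read off directly. For the multiplicative structure I would perform the change of variables $(h,y)\mapsto(h,\nu_h^{-1}(y))$, which is a bijection of $H\times I$ since each $\nu_h\in\aut(I,+)$ and $\nu_1=\id$. A direct substitution shows that in the new coordinates the product becomes $(h_1,z_1)(h_2,z_2)=(h_1h_2,\;\sigma_{h_2}(z_1)+z_2+\tau(h_1,h_2))$, which is precisely the classical extension of the group $(H,\cdot)$ by the abelian group $(I,+)$ determined by the right action $\sigma$ and the cocycle $\tau$. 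Associativity is then equivalent to the $2$-cocycle condition for $\tau$ with respect to $\sigma$, and the identity and inverses come for free; transporting back shows $(H\times I,\cdot)$ is a group with unit $(0,0)$.

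The heart of the proof, and the step I expect to be the main obstacle, is the left brace law $a\cdot(b+c)+a=a\cdot b+a\cdot c$, the only axiom coupling the two operations. Writing $a=(h_1,y_1)$, $b=(h_2,y_2)$, $c=(h_3,y_3)$ and expanding both sides with the given formulas, the $H$-components agree because $H$ is a brace. In the $I$-component, the contributions carrying $y_1$ (the first factor's additive part) cancel using the compatibility \eqref{propact} of $\nu$ and $\sigma$, while the remaining terms built from $\tau$ and $\beta$ match using relation \eqref{propcocycle}. This is the longest computation, but each step is mechanical once the two relations are applied in the right places---indeed, \eqref{propact} and \eqref{propcocycle} were extracted from exactly this identity in the discussion preceding Definition \ref{2cocycle}, so the verification is essentially that derivation run in reverse. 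Alternatively one could exhibit the lambda map of the candidate brace and invoke Lemma \ref{defLambda}, but the direct check seems more transparent.

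It then remains to identify the ideal and the quotient. Using $\nu_0=\sigma_0=\id$ and $\tau(0,0)=\beta(0,0)=0$, the sum and product restricted to $\{0\}\times I$ both reduce to $y_1+y_2$, so $y\mapsto(0,y)$ is a brace isomorphism of $I$ onto the trivial sub-brace $\{0\}\times I$; I would then verify that $\{0\}\times I$ is normal in $(H\times I,\cdot)$ and stable under every $\lambda_{(h,y)}$, so that it is an ideal, and that the projection $(h,y)\mapsto h$ is a surjective brace homomorphism with kernel $\{0\}\times I$, giving $(H\times I)/(\{0\}\times I)\cong H$. For the converse, given $B$, its ideal $I$, and $H=B/I$, I would fix a section $s$ with $s(0)=0$ and $\pi\circ s=\id$ and define $\sigma,\nu,\tau,\beta$ by the formulas of the discussion before the theorem, where they were already shown to form an extension. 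Then $\Phi\colon H\times I\to B$, $\Phi(h,y)=s(h)+y$, is a bijection because every element of $B$ is uniquely of this form; it respects the sum by the definition of $\beta$, and it respects the product by exactly the computation carried out before the theorem, which reproduces the product formula in the statement. Hence $\Phi$ is a brace isomorphism onto $B$ carrying $\{0\}\times I$ to $I$, completing the converse.
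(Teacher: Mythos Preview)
Your proposal is correct and follows the same overall architecture as the paper: verify the additive group via the symmetric cocycle $\beta$, verify the multiplicative group, then check the brace identity using \eqref{propact} and \eqref{propcocycle}, identify the ideal via the projection $(h,y)\mapsto h$, and for the converse use the section $s$ and the computations already carried out before the theorem.

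The one genuine difference is in the multiplicative step. The paper verifies the group axioms for $\cdot$ by brute force: it writes down the unit, an explicit inverse, and expands both sides of associativity until the $\tau$-cocycle identity appears. Your change of variables $(h,y)\mapsto(h,\nu_h^{-1}(y))$ is cleaner: since $\nu_{h_1}\nu_{h_2}=\nu_{h_1h_2}$, the product in the new coordinates collapses to $(h_1h_2,\;\sigma_{h_2}(z_1)+z_2+\tau(h_1,h_2))$, which is literally the classical group extension with right action $\sigma$ and cocycle $\tau$, so all three group axioms come for free. This buys you a shorter and more conceptual argument at the cost of one extra (easy) verification that the substitution really intertwines the two products; the paper's direct computation is longer but entirely self-contained.
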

\begin{proof}
First, we check that the given operations define a structure of left brace. 
The sum defines a structure of abelian group because $\beta$ is a $2$-cocycle of abelian groups, so it 
defines a extension of abelian groups. For the product, we have to check the three properties of a group: 
\begin{enumerate}[(i)]
\item The neutral element is $(0,0)$, since
$$
(h,y)\cdot (0,0)=(h\cdot 0,~\nu_{h\cdot 0}(\sigma_{0}(\nu^{-1}_h(y)))+\nu_h(0)+\nu_{h\cdot 0}(\tau(h,0)))=
(h,y),
$$
and 
$$
(0,0)\cdot (h,y)=(0\cdot h,~\nu_{0\cdot h}(\sigma_{h}(\nu^{-1}_0(0)))+\nu_0(y)+\nu_{0\cdot h}(\tau(0,h))=
(h,y),
$$
recalling that $\tau(0,h)=\tau(h,0)=0$, and that $0$ is both the additive and the multiplicative neutral element 
in a left brace.
\item The inverse of an element $(h,y)$ is $(h^{-1},~y'),$
with $y'=-\nu^{-1}_h(\tau(h,h^{-1}))-\nu^{-1}_h(\sigma^{-1}_h(\nu^{-1}_h(y)))$,
because
\begin{align*}
(h,y)\cdot (h^{-1},~y')&=(h\cdot h^{-1},~\nu_{hh^{-1}}(\tau(h,h^{-1}))+\nu_{hh^{-1}}[\sigma_{h^{-1}}(\nu^{-1}_h(y))]+
\nu_{h}(y'))\\
&=(0,~\tau(h,h^{-1})+\sigma^{-1}_h(\nu^{-1}_h(y))+\nu_h(y'))\\
&=(0,0),
\end{align*}
and 
\begin{align*}
(h^{-1},~y')\cdot (h,y)&=(h^{-1}h,~\nu_{h^{-1}h}(\tau(h^{-1},h))+\nu_{h^{-1}h}[\sigma_{h}(\nu^{-1}_{h^{-1}}(y'))]+
\nu_{h^{-1}}(y))\\
&=(0,~\tau(h^{-1},h)+\sigma_h(\nu_h(y'))+\nu^{-1}_h(y))\\
&=(0,~\tau(h^{-1},h)-\sigma_h(\tau(h,h^{-1})))\\
&=(0,0),
\end{align*}
where in the last step we make use of the $2$-cocycle property of $\tau$: $\tau$ satisfies 
$\sigma_{h_3}(\tau(h_1,h_2))+\tau(h_1\cdot h_2,h_3)=\tau(h_1,h_2\cdot h_3)+\tau(h_2,h_3)$; 
with $h_1=h_3=h$ and $h_2=h^{-1}$, we obtain $\sigma_h(\tau(h,h^{-1}))=\tau(h^{-1},h)$.

\item The associativity of $\cdot$ is proved by means of the cocycle property of $\tau$. On one side,
\begin{align*}
\lefteqn{(h_1,y_1)[(h_2,y_2)(h_3,y_3)]}\\
&=(h_1,y_1)
(h_2\cdot h_3,~\nu_{h_2\cdot h_3}[\sigma_{h_3}(\nu^{-1}_{h_2}(y_2))]+\nu_{h_2}(y_3)+
\nu_{h_2\cdot h_3}(\tau(h_2,h_3)))\\
&=(h_1h_2h_3,~\nu_{h_1h_2h_3}[\sigma_{h_2h_3}(\nu^{-1}_{h_1}(y_1))]+
\nu_{h_1h_2h_3}[\sigma_{h_3}(\nu^{-1}_{h_2}(y_2))]\\
&\quad +\nu_{h_1h_2}(y_3)+\nu_{h_1h_2h_3}(\tau(h_2,h_3))+
\nu_{h_1h_2h_3}(\tau(h_1,h_2h_3))~).
\end{align*}
On the other side,
\begin{align*}
\lefteqn{[(h_1,y_1)(h_2,y_2)](h_3,y_3)}\\
&=(h_1\cdot h_2,~\nu_{h_1\cdot h_2}(\sigma_{h_2}(\nu^{-1}_{h_1}(y_1)))+\nu_{h_1}(y_2)+
\nu_{h_1\cdot h_2}(\tau(h_1,h_2)))\cdot (h_3,y_3)\\
&=(h_1h_2h_3,~\nu_{h_1h_2h_3}[\sigma_{h_3}(\sigma_{h_2}(\nu^{-1}_{h_1}(y_1)))]+\nu_{h_1h_2h_3}[\sigma_{h_3}(\nu^{-1}_{h_2}(y_2))]\\
&\quad +\nu_{h_1h_2h_3}[\sigma_{h_3}(\tau(h_1,h_2))]+
\nu_{h_1h_2}(y_3)+\nu_{h_1h_2h_3}(\tau(h_1h_2,h_3))~),
\end{align*}
and the two sides are equal using $\sigma_{h_3}\sigma_{h_2}=\sigma_{h_2h_3}$ (i.e. $\sigma$ is a right action), and 
the the fact that $\tau$ is a $2$-cocycle with respect to $\sigma$. 
\end{enumerate}

Moreover, the two operations must be related by the brace property. This is proved using the property (\ref{propact})
that relates the actions $\nu$ and $\sigma$, and the property (\ref{propcocycle}) that relates the cocycles $\tau$ and $\beta$:
on one side,
\begin{align*}
\lefteqn{(h_1,y_1)[(h_2,y_2)+(h_3,y_3)]+(h_1,y_1)}\\
&=(h_1,y_1)(h_2+h_3,~y_2+y_3+\beta(h_2,h_3))+(h_1,y_1)\\
&=(h_1(h_2+h_3),~\nu_{h_1(h_2+h_3)}[\sigma_{h_2+h_3}(\nu^{-1}_{h_1}(y_1))]+\nu_{h_1}(y_2+y_3+\beta(h_2,h_3))\\
&\quad +\nu_{h_1(h_2+h_3)}(\tau(h_1,h_2+h_3))~)+(h_1,y_1)\\
&=(h_1(h_2+h_3)+h_1,~\nu_{h_1(h_2+h_3)}[\sigma_{h_2+h_3}(\nu^{-1}_{h_1}(y_1))]+\nu_{h_1}(y_2+y_3+\beta(h_2,h_3))\\
&\quad +\nu_{h_1(h_2+h_3)}(\tau(h_1,h_2+h_3))+y_1+\beta(h_1(h_2+h_3),h_1)~).\\
\end{align*}
On the other,
\begin{align*}
\lefteqn{(h_1,y_1)(h_2,y_2)+(h_1,y_1)(h_3,y_3)}\\
&=(h_1\cdot h_2,~\nu_{h_1\cdot h_2}(\sigma_{h_2}(\nu^{-1}_{h_1}(y_1)))+\nu_{h_1}(y_2)+
\nu_{h_1\cdot h_2}(\tau(h_1,h_2))~)\\
&\quad +(h_1\cdot h_3,~\nu_{h_1\cdot h_3}(\sigma_{h_3}(\nu^{-1}_{h_1}(y_1)))+\nu_{h_1}(y_3)+
\nu_{h_1\cdot h_3}(\tau(h_1,h_3))~)\\
&=(h_1h_2+h_1h_3,~\nu_{h_1\cdot h_2}(\sigma_{h_2}(\nu^{-1}_{h_1}(y_1)))+\nu_{h_1}(y_2)+
\nu_{h_1\cdot h_2}(\tau(h_1,h_2))\\
&\quad +\nu_{h_1\cdot h_3}(\sigma_{h_3}(\nu^{-1}_{h_1}(y_1)))+\nu_{h_1}(y_3)+
\nu_{h_1\cdot h_3}(\tau(h_1,h_3))+
\beta(h_1h_2,h_1h_3)~).
\end{align*}
The first components are equal because $H$ is a left brace. About the second components, the equality 
follows from (\ref{propact}) and (\ref{propcocycle}).

Note that the map $H\times I\to H$, $(h,y)\mapsto h$, is a surjective morphism of left braces, 
so its kernel $\{0\}\times I$ is an ideal, and $(H\times I)/(\{0\}\times I)\cong H$ as left braces. 

\vspace{0.3cm}
Finally, for the converse statement, using the notation from the beginning of this section, 
we consider the ideal $I$ of a left brace $B$, and the left brace $H=B/I$, 
and we define $\sigma_h(y):=s(h)^{-1}\cdot y\cdot s(h)$, 
$\nu_h(y):=\lambda_{s(h)}(y)$, $\tau(h_1,h_2):=s(h_1\cdot h_2)^{-1}\cdot s(h_1)\cdot s(h_2)$, and $\beta(h_1,h_2):=s(h_1)+s(h_2)-s(h_1+h_2).$
Then, we have checked before that all this defines an extension $(H,I,\sigma,\nu,\tau,\beta)$, and 
it is easy to check that the map $B\to H\times I$, defined by $s(h)+y\mapsto (h,y)$ for $h\in H$ and $y\in I$, 
is an isomorphism.
\end{proof}

These extensions with $I$ a trivial brace are not the most general possible, as we show in Remark \ref{semisimple}. 
But, as the next example shows, our results generalize the extensions by ideals contained in the socle, 
given in \cite[Corollary D]{NirBenDavid} and \cite[Theorem 2.1]{B}.

\begin{example}
Consider the left brace $B$ whose additive groups is $(\Z/(2))^3$, and whose multiplication is defined by
$$
\begin{pmatrix}
x_1\\
y_1\\
z_1\\
\end{pmatrix}\cdot
\begin{pmatrix}
x_2\\
y_2\\
z_2\\
\end{pmatrix}:=
\begin{pmatrix}
x_1\\
y_1\\
z_1\\
\end{pmatrix}+
\begin{pmatrix}
1& z_1& x_1+y_1+x_1z_1\\
0& 1& z_1+x_1+y_1z_1\\
0& 0& 1
\end{pmatrix} 
\begin{pmatrix}
x_2\\
y_2\\
z_2\\
\end{pmatrix},
$$
for any $(x_1,y_1,z_1)^t,(x_2,y_2,z_2)^t\in(\Z/(2))^3$. It appeared in \cite[Theorem 3.1]{B}. 
It is impossible to obtain it with \cite[Corollary D]{NirBenDavid}, because the socle 
of $B$ is trivial. Nevertheless, the set $I=\{(x,y,0)\in B : x,y\in\Z/(2)\}$ is an ideal 
of $B$ with trivial structure, so it is possible to obtain it as an extension of 
$B/I\cong \Z/(2)$ by $I$ using Theorem \ref{ThmExt}. Explicitly, one has to take 
$\tau(a,b)=(0,ab)$, $\beta(a,b)=(0,0)$, $\sigma_a(x,y)=(x,y+ax)$, and $\nu_a(x,y)=(x+ay,y)$, for any
$a,b,x,y,z\in\Z/(2)$.
\end{example}

From the point of view of extensions, the most basic braces are simple braces, because 
they are the starting pieces to build extensions. 
Until now, the only known examples of simple left braces were the cyclic groups $\Z/(p)$, where $p$ is a prime,
with product $a\cdot b:=a+b$ \cite[Corollary after Proposition 8]{Rump}. 
In the next sections, we will construct the first examples of non-trivial 
simple left braces. For this, we need to know how the Sylow subgroups of a finite left brace are related.

\section{Product of left ideals and Sylow $p$-subgroups}

Let $B$ be a left brace, and assume that $(B,+)=(G,+)\oplus (H,+)$, where $G$ and $H$ are left ideals. 
For $a,x\in G$ and $b,y\in H$, we can decompose the lambda maps as 
$\lambda_{a+b}(x+y)=\lambda_{a+b}(x)+\lambda_{a+b}(y)=
\lambda_{b}\lambda_{\lambda^{-1}_{b}(a)}(x)+\lambda_{a}\lambda_{\lambda^{-1}_{a}(b)}(y)$. 
Thus it is enough to know $\lambda_{x}(y)$, for $x\in G$ and $y\in H$, or for $x\in H$ and $y\in G$.
We write $\lambda_{b}(x)=:\alpha_b(x)$, $\lambda_{a}(y)=:\beta_a(y)$, 
$\lambda_{a}(x)=:\lambda^{(1)}_a(x)$, $\lambda_{b}(y)=:\lambda^{(2)}_b(y)$. Then, it is easy to see 
that these maps satisfy 
$$
\lambda_a^{(1)}\circ\alpha_b=\alpha_{\beta_a(b)}\circ\lambda^{(1)}_{\alpha^{-1}_{\beta_a(b)}(a)},
$$
and 
$$
\lambda_b^{(2)}\circ\beta_a=\beta_{\alpha_b(a)}\circ\lambda^{(2)}_{\beta^{-1}_{\alpha_b(a)}(b)},
$$
for $a\in G$ and $b\in H$.

\begin{definition}\label{matchedpair}
Let $G$ and $H$ be two left braces. Let
$\alpha: (H,\cdot)\to\aut(G,+)$ and $\beta: (G,\cdot)\to\aut(H,+)$ 
be group homomorphisms. We say that $(G,H,\alpha,\beta)$ is a \emph{matched pair of 
left braces} if $\alpha$ and $\beta$ satisfy
\begin{align}\label{property1}
\lambda_a^{(1)}\circ\alpha_b=\alpha_{\beta_a(b)}\circ\lambda^{(1)}_{\alpha^{-1}_{\beta_a(b)}(a)},
\end{align}
and 
\begin{align}\label{property2}
\lambda_b^{(2)}\circ\beta_a=\beta_{\alpha_b(a)}\circ\lambda^{(2)}_{\beta^{-1}_{\alpha_b(a)}(b)},
\end{align}
where $\alpha(b)=\alpha_b$ and $\beta(a)=\beta_a$, for all $a\in G$ and $b\in H$, and 
$\lambda^{(1)}$ and $\lambda^{(2)}$ are the lambda maps of the left braces $G$ and $H$ respectively.
\end{definition}

\begin{theorem}\label{product}
Let $(G,H,\alpha,\beta)$ be a matched pair of left braces. 
Then $G\times H$ is a left brace with sum
$$
(a,b)+(a',b'):=(a+a',b+b'),
$$
and lambda maps given by
$$
\lambda_{(a,b)}(a',b')=\left(\alpha_b\lambda^{(1)}_{\alpha^{-1}_b(a)}(a'),~\beta_a\lambda^{(2)}_{\beta^{-1}_a(b)}(b')\right).
$$ 
Moreover, $G\times\{0\}$ and $\{0\}\times H$ are two left ideals isomorphic to $G$ and $H$ as left braces, and such that 
$(G\times H,+)= (G\times\{0\})\oplus (\{0\}\times H)$.

Conversely, if $B$ is a left brace, and $I_1$ and $I_2$ are two left ideals of $B$ such that 
$(B,+)=(I_1,+)\oplus (I_2,+)$, then $B$ is isomorphic to one of the braces described before, 
constructed from $I_1$ and $I_2$. 
\end{theorem}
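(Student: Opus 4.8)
The plan is to build the brace structure on $G\times H$ through Lemma \ref{defLambda} rather than by verifying the brace axioms directly. The additive group is the external direct sum $(G,+)\oplus(H,+)$, which is abelian, and each proposed $\lambda_{(a,b)}$ acts diagonally, with first component $\alpha_b\lambda^{(1)}_{\alpha^{-1}_b(a)}\in\aut(G,+)$ and second component $\beta_a\lambda^{(2)}_{\beta^{-1}_a(b)}\in\aut(H,+)$; being a composite of automorphisms in each slot, $\lambda_{(a,b)}$ is an automorphism of $(G\times H,+)$. By Lemma \ref{defLambda}, the only remaining thing to check is the cocycle identity
$$\lambda_{(a,b)}\circ\lambda_{(a',b')}=\lambda_{(a,b)+\lambda_{(a,b)}(a',b')}.$$
Because both sides act diagonally, this splits into two independent identities, one in $\aut(G,+)$ and one in $\aut(H,+)$, and these are interchanged by swapping the roles of $(G,\alpha,\lambda^{(1)})$ and $(H,\beta,\lambda^{(2)})$; so it suffices to establish the $G$-component identity.

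Writing $(a'',b''):=(a,b)+\lambda_{(a,b)}(a',b')$, the $G$-component identity to be proved is
$$\alpha_b\lambda^{(1)}_{\alpha^{-1}_b(a)}\,\alpha_{b'}\lambda^{(1)}_{\alpha^{-1}_{b'}(a')}=\alpha_{b''}\lambda^{(1)}_{\alpha^{-1}_{b''}(a'')}.$$
I would treat the left-hand side by first pushing the inner pair $\lambda^{(1)}_{\alpha^{-1}_b(a)}\alpha_{b'}$ through one another by means of (\ref{property1}), which produces a factor $\alpha_e$ with $e:=\beta_{\alpha^{-1}_b(a)}(b')$ on the left and a single $\lambda^{(1)}$ on the right. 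Next I would merge $\alpha_b\alpha_e=\alpha_{b\cdot e}$ using that $\alpha$ is a multiplicative homomorphism, and identify $b\cdot e=b''$ via (\ref{property2}) in the form $\lambda^{(2)}_b\beta_{\alpha^{-1}_b(a)}=\beta_a\lambda^{(2)}_{\beta^{-1}_a(b)}$; this shows the $\alpha$-prefactors on the two sides already agree. The two surviving $\lambda^{(1)}$ factors can then be combined through $\lambda^{(1)}_u\circ\lambda^{(1)}_v=\lambda^{(1)}_{u\cdot v}$ (valid since $\lambda^{(1)}$ is the lambda map of the brace $G$), and a final application of (\ref{property1}) matches the resulting subscript with $\alpha^{-1}_{b''}(a'')$. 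This bookkeeping — tracking which element is being acted on after each substitution — is the one genuinely delicate step of the argument; everything else reduces to repeated use of (\ref{property1}), (\ref{property2}), and the homomorphism properties of $\alpha$ and $\beta$.

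For the \emph{moreover} assertion, I would note that $\lambda_{(a,b)}(x,0)=(\alpha_b\lambda^{(1)}_{\alpha^{-1}_b(a)}(x),0)$, since $\beta_a$ and $\lambda^{(2)}$ fix $0$, so $G\times\{0\}$ is closed under every lambda map; and since $\alpha_0=\id$ (as $0$ is the identity of $(H,\cdot)$ and $\alpha$ is a homomorphism), the induced product is $(a,0)\cdot(a',0)=(a\cdot a',0)$, so $G\times\{0\}$ is a multiplicative subgroup, hence a left ideal, isomorphic to $G$ as a brace. The argument for $\{0\}\times H$ is symmetric, and the additive decomposition holds by construction.

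For the converse, given left ideals $I_1,I_2$ with $(B,+)=(I_1,+)\oplus(I_2,+)$, I would set $G=I_1$, $H=I_2$ and define $\alpha_b:=\lambda_b|_{I_1}$ and $\beta_a:=\lambda_a|_{I_2}$; these land in $\aut(G,+)$ and $\aut(H,+)$ because the $I_j$ are left ideals, and they are homomorphisms because $\lambda$ is. The matched-pair identities (\ref{property1}) and (\ref{property2}) are exactly the relations derived just before Definition \ref{matchedpair}, so $(G,H,\alpha,\beta)$ is a matched pair. Finally I would check that the additive isomorphism $\Phi\colon G\times H\to B$, $(a,b)\mapsto a+b$, intertwines the lambda maps: the decomposition $\lambda_{a+b}(x)=\lambda_b\lambda_{\lambda^{-1}_b(a)}(x)=\alpha_b\lambda^{(1)}_{\alpha^{-1}_b(a)}(x)$ for $x\in G$ (and its symmetric counterpart on $H$) gives $\Phi\circ\lambda^{G\times H}_{(a,b)}=\lambda^{B}_{\Phi(a,b)}\circ\Phi$, so $\Phi$ is an isomorphism of left braces by the criterion in Lemma \ref{defLambda}.
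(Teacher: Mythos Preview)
Your proposal is correct and follows essentially the same approach as the paper: both invoke Lemma~\ref{defLambda}, reduce the cocycle identity to a componentwise check, and carry out the $G$-component verification by repeated use of (\ref{property1}) and (\ref{property2}) together with the homomorphism property of $\alpha$; the paper simply organizes the same manipulations by expanding the right-hand side (identifying $a''=a\cdot\alpha_{\beta^{-1}_a(b)}(a')$ and $b''=b\cdot\beta_{\alpha^{-1}_b(a)}(b')$ first) rather than pushing through the left-hand side as you do. Your treatment of the ``moreover'' clause and of the converse is likewise the same in spirit---the paper's converse is dispatched in one sentence by pointing back to the discussion preceding Definition~\ref{matchedpair}, whereas you spell out the intertwining of $\lambda$-maps explicitly, which is a welcome bit of extra detail.
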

\begin{proof}
By Lemma \ref{defLambda}, we only have to check that 
$$\lambda_{(a,b)}\circ\lambda_{(x,y)}=\lambda_{(a,b)+\lambda_{(a,b)}(x,y)}.$$
By the definition of lambda maps, we can check that component by component separately, and then 
the last equality is equivalent to 
\begin{align}\label{equality}
\alpha_b\circ\lambda^{(1)}_{\alpha_b^{-1}(a)}\circ\alpha_{y}\circ\lambda^{(1)}_{\alpha_y^{-1}(x)}=
\alpha_{b+\beta_a\lambda^{(2)}_{\beta^{-1}_a(b)}(y)}
\circ
\lambda^{(1)}_{\alpha^{-1}_{b+\beta_a\lambda^{(2)}_{\beta^{-1}_a(b)}(y)}(a+\alpha_b\lambda^{(1)}_{\alpha^{-1}_b(a)}(x))},
\end{align}
$$
\beta_a\circ\lambda^{(2)}_{\beta_a^{-1}(b)}\circ\beta_x\circ\lambda^{(2)}_{\beta^{-1}_x(y)}=
\beta_{a+\alpha_b\lambda^{(1)}_{\alpha^{-1}_b(a)}(x)}
\circ
\lambda^{(2)}_{\beta^{-1}_{a+\alpha_b\lambda^{(1)}_{\alpha^{-1}_b(a)}(x)}(b+\beta_a\lambda^{(2)}_{\beta^{-1}_a(b)}(y))}.
$$

We are only going to check the first equality; the second one is analogous. Note that, by (\ref{property1}),
$a+\alpha_b\lambda^{(1)}_{\alpha^{-1}_b(a)}(x)=a+\lambda^{(1)}_a\alpha_{\beta^{-1}_a(b)}(x)=a\cdot\alpha_{\beta^{-1}_a(b)}(x)$, 
and by (\ref{property2}),
$b+\beta_a\lambda^{(2)}_{\beta^{-1}_a(b)}(y)=b+\lambda^{(2)}_b\beta_{\alpha^{-1}_b(a)}(y)=b\cdot\beta_{\alpha^{-1}_b(a)}(y)$ 
, so the right-hand side can 
be rewritten as
$$
\alpha_{b\cdot\beta_{\alpha^{-1}_b(a)}(y)}
\circ
\lambda^{(1)}_{\alpha^{-1}_{b\cdot\beta_{\alpha^{-1}_b(a)}(y)}(a\cdot\alpha_{\beta^{-1}_a(b)}(x))}=
\alpha_{b}\circ\alpha_{\beta_{\alpha^{-1}_b(a)}(y)}
\circ
\lambda^{(1)}_{\alpha^{-1}_{b\cdot\beta_{\alpha^{-1}_b(a)}(y)}(a\cdot\alpha_{\beta^{-1}_a(b)}(x))}.
$$
Now we analyse separately the term 
\begin{align*}
\alpha^{-1}_{b\cdot\beta_{\alpha^{-1}_b(a)}(y)}(a\cdot\alpha_{\beta^{-1}_a(b)}(x))&=
\alpha^{-1}_{b\cdot\beta_{\alpha^{-1}_b(a)}(y)}(a+\lambda^{(1)}_a\alpha_{\beta^{-1}_a(b)}(x))\\
&=\alpha^{-1}_{b\cdot\beta_{\alpha^{-1}_b(a)}(y)}(a)+
\alpha^{-1}_{b\cdot\beta_{\alpha^{-1}_b(a)}(y)}\lambda^{(1)}_a\alpha_{\beta^{-1}_a(b)}(x)\\
&=(\star)+(\star\star).
\end{align*}
The second summand is 
\begin{align*}
(\star\star)&=\alpha^{-1}_{b\cdot\beta_{\alpha^{-1}_b(a)}(y)}\lambda^{(1)}_a\alpha_{\beta^{-1}_a(b)}(x)\\
&=\lambda^{(1)}_{\alpha^{-1}_{b\cdot\beta_{\alpha^{-1}_b(a)}(y)}(a)}
\alpha^{-1}_{\beta^{-1}_a(b\cdot\beta_{\alpha^{-1}_b(a)}(y))}\alpha_{\beta^{-1}_a(b)}(x) \qquad\text{ (by (\ref{property1}))}\\
&=\lambda^{(1)}_{\alpha^{-1}_{b\cdot\beta_{\alpha^{-1}_b(a)}(y)}(a)}
\alpha^{-1}_y(x),
\end{align*}
where in the last equality we are using
\begin{align*}
\beta^{-1}_a(b\cdot\beta_{\alpha^{-1}_b(a)}(y))&=\beta^{-1}_a(b+\lambda^{(2)}_b\beta_{\alpha^{-1}_b(a)}(y))\\
&=\beta^{-1}_a(b)+\lambda^{(2)}_{\beta^{-1}_a(b)}\beta^{-1}_{\alpha^{-1}_b(a)}\beta_{\alpha^{-1}_b(a)}(y)
\qquad\text{ (by (\ref{property2}))}\\
&=\beta^{-1}_a(b)\cdot y
\end{align*}
So we get
\begin{align*}
\alpha^{-1}_{b\cdot\beta_{\alpha^{-1}_b(a)}(y)}(a\cdot\alpha_{\beta^{-1}_a(b)}(x))&=(\star)+(\star\star)\\
&=\alpha^{-1}_{b\cdot\beta_{\alpha^{-1}_b(a)}(y)}(a)+\lambda^{(1)}_{\alpha^{-1}_{b\cdot\beta_{\alpha^{-1}_b(a)}(y)}(a)}
\alpha^{-1}_y(x)\\
&=\alpha^{-1}_{\beta_{\alpha^{-1}_b(a)}(y)}\alpha^{-1}_b(a)\cdot \alpha_y^{-1}(x)\\
\end{align*}
and, summarizing, the right-hand side of (\ref{equality}) becomes equal to 
\begin{align}\label{RHSfinal}
\alpha_b\circ\alpha_{\beta_{\alpha^{-1}_b(a)}(y)}\circ
\lambda^{(1)}_{\alpha^{-1}_{\beta_{\alpha^{-1}_b(a)}(y)}\alpha^{-1}_b(a)}\circ\lambda^{(1)}_{\alpha_y^{-1}(x)}.
\end{align}

Now, if we apply the identity (\ref{property1}) to the middle terms of the left-hand side of (\ref{equality}), we get
$$
\alpha_b\circ\lambda^{(1)}_{\alpha_b^{-1}(a)}\circ\alpha_{y}\circ\lambda^{(1)}_{\alpha_y^{-1}(x)}=
\alpha_b\circ\alpha_{\beta_{\alpha^{-1}_b(a)}(y)}\circ
\lambda^{(1)}_{\alpha^{-1}_{\beta_{\alpha^{-1}_b(a)}(y)}\alpha^{-1}_b(a)}\circ\lambda^{(1)}_{\alpha_y^{-1}(x)},
$$ 
which is exactly (\ref{RHSfinal}), as we wanted.

Finally, for the converse statement, it suffices to review the argument that led us to Definition \ref{matchedpair}.
\end{proof}

\begin{definition}
Let $(G,H,\alpha,\beta)$ be a matched pair of left braces. Then, the left brace defined in Theorem \ref{product}
is called the matched product of $G$ and $H$. We denote it by $G\bowtie H$. 
\end{definition}

\begin{remark}
Observe that the additive group of a matched product $G\bowtie H$ of left braces is a direct 
product of the additive groups of $G$ and $H$. 
Note also that the multiplicative group of $G\bowtie H$ is in fact a matched product of 
the groups $(G,\cdot)$ and $(H,\cdot)$.
To see it, one can take the left action of $(H,\cdot)$ on the set $G$ as $^{b} a:=\alpha_b(a)$, 
and the right action of $(G,\cdot)$ on the set $H$ as $b^a:=\beta^{-1}_{\alpha_b(a)}(b)$, for any
$a\in G$ and $b\in H$ (see \cite[Definition IX.1.1]{Kassel}). 
\end{remark}

\begin{example}
Taking $\beta_a=\id$ for any $a\in G$ in the conditions of Theorem \ref{product}, we get the semidirect product of left braces, 
as is already defined in \cite{Rump4}. In particular, 
when $\alpha_b=\id$ and $\beta_a=\id$, we get the direct product of left braces.
\end{example}

\begin{example}\label{nB}
Let $B$ be a finite left brace. Then, for any positive integer $n$, 
the additive subgroup $nB=\left\{na: a\in B\right\}$
is closed by the lambda maps, because $\lambda_b\left(na\right)=n\lambda_b(a)$.
Hence $nB$ is a left ideal; in particular, it is also a subgroup of $(B,\cdot)$, because, if 
$a,a'\in nB$, then $a\cdot a'=a+\lambda_a(a')\in nB$.

For any decomposition of $m=\mid B\mid$ of the form $m=m'\cdot m''$ for two integers 
$m'$ and $m''$ such that $\gcd(m',m'')=1$, we obtain a left ideal $m''B$ of order 
$m'$, and a left ideal $m'B$ of order $m''$, and by Theorem \ref{product}, 
$B\cong (m'B)\bowtie (m''B)$. 

The last construction applies in particular when $m'=p^{\alpha}$ is a power of a prime
number, and $\gcd(p, m'')=1$. Then we obtain that any left brace is an iterated 
matched product of some of its Sylow subgroups. 
In particular, we can always follow the following technique to construct left braces over 
finite solvable groups: first, find brace structures over the Sylow subgroups, and second,
glue them together with a matched product (in case that it is possible). This is 
the technique that we follow in Section 6.
\end{example}

When $G$ and $H$ have a finite number of elements, and $\gcd(\ord{G},\ord{H})=1$, 
we can compute the socle of $G\bowtie H$.

\begin{proposition}
Let $(G,H,\alpha,\beta)$ be a matched pair such that $\ord{G},\ord{H}<\infty$ and 
$\gcd(\ord{G},\ord{H})=1$. Then, 
$\soc(G\bowtie H)=(\soc(G)\cap\ker(\beta))\times (\soc(H)\cap\ker(\alpha))$. 
\end{proposition}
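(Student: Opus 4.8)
The plan is to reduce everything to the explicit lambda map of $G\bowtie H$ supplied by Theorem \ref{product}, using that the socle of any left brace $B$ is $\soc(B)=\ker(\lambda)=\{c\in B:\lambda_c=\id\}$. So I would first observe that computing $\soc(G\bowtie H)$ amounts to deciding for which pairs $(a,b)\in G\times H$ the automorphism $\lambda_{(a,b)}$ is the identity. Since the formula
$$\lambda_{(a,b)}(a',b')=\left(\alpha_b\lambda^{(1)}_{\alpha^{-1}_b(a)}(a'),~\beta_a\lambda^{(2)}_{\beta^{-1}_a(b)}(b')\right)$$
acts on the two coordinates separately, the condition $\lambda_{(a,b)}=\id$ is equivalent to the two identities $\alpha_b\circ\lambda^{(1)}_{\alpha^{-1}_b(a)}=\id_G$ and $\beta_a\circ\lambda^{(2)}_{\beta^{-1}_a(b)}=\id_H$, which after rearranging read $\alpha_b^{-1}=\lambda^{(1)}_{\alpha^{-1}_b(a)}$ and $\beta_a^{-1}=\lambda^{(2)}_{\beta^{-1}_a(b)}$.

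For the inclusion $\supseteq$ I would argue directly, with no use of the coprimality hypothesis. Recalling that $\alpha_0=\id$, $\beta_0=\id$ (both $\alpha$ and $\beta$ are homomorphisms out of groups with multiplicative identity $0$) and that $\lambda^{(1)}_0=\lambda^{(2)}_0=\id$, a pair $(a,b)$ with $a\in\soc(G)\cap\ker(\beta)$ and $b\in\soc(H)\cap\ker(\alpha)$ satisfies $\alpha_b=\id$, $\beta_a=\id$, $\lambda^{(1)}_a=\id$, $\lambda^{(2)}_b=\id$; substituting these into the two identities above shows at once that $\lambda_{(a,b)}=\id$, so $(a,b)\in\soc(G\bowtie H)$.

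The hard direction is $\subseteq$, and this is exactly where $\gcd(\ord{G},\ord{H})=1$ is needed. Suppose $\lambda_{(a,b)}=\id$, so $\alpha_b^{-1}=\lambda^{(1)}_{\alpha^{-1}_b(a)}$. The left-hand side lies in $\im(\alpha)$, and since $\alpha$ is a homomorphism from $(H,\cdot)$ the order of $\alpha_b$ divides $\ord{H}$; the right-hand side is the lambda map of an element of $G$, and since $\lambda^{(1)}$ is a homomorphism from $(G,\cdot)$ its order divides $\ord{G}$. Being equal automorphisms, their common order divides $\gcd(\ord{G},\ord{H})=1$, forcing $\alpha_b=\id$; feeding this back gives $\lambda^{(1)}_a=\id$, i.e. $a\in\soc(G)$, while $\alpha_b=\id$ says $b\in\ker(\alpha)$. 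The symmetric argument applied to $\beta_a^{-1}=\lambda^{(2)}_{\beta^{-1}_a(b)}$ yields $\beta_a=\id$ (so $a\in\ker(\beta)$) and $b\in\soc(H)$. Collecting these gives $(a,b)\in(\soc(G)\cap\ker(\beta))\times(\soc(H)\cap\ker(\alpha))$, as desired.

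I expect the only real obstacle to be this $\subseteq$ step, and conceptually it is the point worth emphasizing: a priori $\lambda_{(a,b)}$ could become trivial through a cancellation between the crossed actions $\alpha,\beta$ and the internal lambda maps $\lambda^{(1)},\lambda^{(2)}$, producing ``diagonal'' socle elements with $a\notin\soc(G)$ or $b\notin\soc(H)$; the coprime-order argument is precisely what rules this out. The remaining points are routine: that the inverse of a lambda map is again a lambda map (immediate, as $\lambda^{(1)},\lambda^{(2)}$ are group homomorphisms) and that $\soc$ is read off as $\ker(\lambda)$. As a sanity check one may instead note that, since $\soc(G\bowtie H)$ is an additive subgroup and $\gcd(\ord{G},\ord{H})=1$, it splits as the direct sum of its intersections with $G\times\{0\}$ and $\{0\}\times H$, and evaluating $\lambda_{(a,0)}$ and $\lambda_{(0,b)}$ recovers the same two factors; this alternative makes the role of coprimality equally transparent.
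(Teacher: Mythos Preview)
Your proof is correct and follows essentially the same approach as the paper: both reduce to the componentwise identity $\alpha_b\lambda^{(1)}_{\alpha_b^{-1}(a)}=\id$ (and its symmetric counterpart) and then use coprimality of $\ord{G}$ and $\ord{H}$ to force $\alpha_b$ and $\lambda^{(1)}_{\alpha_b^{-1}(a)}$ separately to be the identity---the paper phrases this as $\im(\alpha)\cap\im(\lambda^{(1)})=0$, which is exactly your order-divisibility argument.
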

\begin{proof}
With the hypothesis $\gcd(\ord{G},\ord{H})=1$, it is true that
$\im(\alpha)\cap\im(\lambda^{(1)})=0$, so 
$\alpha_b\lambda^{(1)}_{\alpha^{-1}_b(a)}=\id$ if and only if
$\alpha_b=\lambda^{(1)}_{\alpha^{-1}_b(a)}=\id$, or, equivalently,
if and only if $\alpha_b=\lambda^{(1)}_{a}=\id$. Doing the same with 
the second component, we have that $\beta_a\lambda^{(2)}_{\beta^{-1}_a(b)}=\id$ if and only if 
$\beta_a=\lambda^{(2)}_b=\id$.

Hence an element $(a,b)\in G\bowtie H$ belongs to the socle if and only if
$\alpha_b=\lambda^{(1)}_{a}=\beta_a=\lambda^{(2)}_b=\id$.
\end{proof}

If we want to find simple braces using matched pairs, in particular we want $\soc(G\bowtie H)$ to be $0$. 
By the last proposition, one easy way to obtain trivial socle in the matched pair is to 
have $\soc(G)=\soc(H)=0$. A family of braces of order a power of a prime and of trivial socle was 
given in \cite{Hegedus}. In the next section, we describe this family.

\section{A family of braces with trivial socle}\label{trivialsocle}

Consider the field $\mathbb{F}_p$ of $p$ elements, where $p$ is a prime.  
From now one we will work in the vector space $\mathbb{F}_p^{n+1}$, 
and we will use some linear and bilinear maps, so we establish the following notation. 

\paragraph{Notation:} We think the vector space $\mathbb{F}_p^{n+1}$ as 
$\mathbb{F}_p^{n}\times \mathbb{F}_p$; hence we
 denote any element of $\mathbb{F}_p^{n+1}$ by $(\vec{x},x_{n+1}),$
where $\vec{x}:=(x_1,\cdots,x_n)$ is an element of $\mathbb{F}_p^{n}$. Besides, 
we will denote linear or bilinear maps on $\mathbb{F}_p^n$ by lower-case letters, 
and their associated matrices with respect 
to the canonical basis, by capital letters (i.e. if $f$ is a linear endomorphism of $\mathbb{F}_p^n$,
then $f(\vec{x})=(F\cdot \vec{x}^t)^t$, where $F\in M_n(\mathbb{F}_p)$).

\paragraph{} Given a left brace $B$, recall that the socle of $B$ is defined as
$$
\soc(B)=\{a\in B:\lambda_a=\id\}=\{a\in B:a\cdot b=a+b\text{ for any } b\in B\}.
$$
Here we present a family of left braces with trivial socle, first appearing in \cite{CR,Hegedus}.
The concepts about quadratic forms that we use can be found for example in \cite{Pfister}

\begin{theorem}[\cite{Hegedus}, \cite{CR}]
Let $p$ be a prime number, and let $n\geq 1$ be an integer. Assume that we are given
 a quadratic form $Q$ on $\mathbb{F}_p^n$, and an element
$f$ of order $p$ of the orthogonal group of $Q$ (that is, an element $f\in \aut(\mathbb{F}_p^n)$ of order $p$
such that $Q(f(\vec{x}))=Q(\vec{x})$ for any $\vec{x}\in \mathbb{F}_p^n$). 
Then, the abelian group $\mathbb{F}_p^{n+1}$ with lambda maps 
$$
\lambda_{(\vec{x},x_{n+1})}(\vec{y},y_{n+1}):=(f^{q(\vec{x},x_{n+1})}(\vec{y}), ~y_{n+1}+b(\vec{x},f^{q(\vec{x},x_{n+1})}(\vec{y})))
$$
defines a structure of left brace, where $q(\vec{x},x_{n+1}):=x_{n+1}-Q(\vec{x})$, and $b$ is 
the associated bilinear form  $b(\vec{x},\vec{y}):=Q(\vec{x}+\vec{y})-Q(\vec{x})-Q(\vec{y})$.

Moreover, if $Q$ is non-degenerate, then the socle of this brace is equal to zero. 
\end{theorem}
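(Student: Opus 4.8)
The plan is to invoke Lemma~\ref{defLambda}: since the operations are prescribed through the maps $\lambda_{(\vec{x},x_{n+1})}$, it suffices to check (a) that each such map is an automorphism of the additive group $\mathbb{F}_p^{n+1}$, and (b) the compatibility $\lambda_a\circ\lambda_{a'}=\lambda_{a+\lambda_a(a')}$ for all $a,a'\in\mathbb{F}_p^{n+1}$. Part (a) is routine: writing $k:=q(\vec{x},x_{n+1})$, the map sends $(\vec{y},y_{n+1})$ to $(f^k(\vec{y}),\,y_{n+1}+b(\vec{x},f^k(\vec{y})))$, which is linear because $f^k$ is linear and $b$ is bilinear, and invertible because $f^k$ is invertible and the last coordinate can be recovered as $y_{n+1}=z_{n+1}-b(\vec{x},\vec{z})$ from the image $(\vec{z},z_{n+1})$. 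So the real content is~(b).

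For (b), write $a=(\vec{x},x_{n+1})$ and $a'=(\vec{y},y_{n+1})$, and set $k:=q(a)=x_{n+1}-Q(\vec{x})$ and $l:=q(a')=y_{n+1}-Q(\vec{y})$; these exponents live in $\Z/(p)$, which is meaningful precisely because $f$ has order $p$. The first key step is to compute the exponent attached to $a+\lambda_a(a')$. Its $\mathbb{F}_p^n$-part is $\vec{x}+f^k(\vec{y})$ and its last coordinate is $x_{n+1}+y_{n+1}+b(\vec{x},f^k(\vec{y}))$, so
$$
q(a+\lambda_a(a'))=x_{n+1}+y_{n+1}+b(\vec{x},f^k(\vec{y}))-Q(\vec{x}+f^k(\vec{y})).
$$
Expanding $Q(\vec{x}+f^k(\vec{y}))=Q(\vec{x})+Q(f^k(\vec{y}))+b(\vec{x},f^k(\vec{y}))$ and using that $f$ lies in the orthogonal group of $Q$, so that $Q(f^k(\vec{y}))=Q(\vec{y})$, the term $b(\vec{x},f^k(\vec{y}))$ cancels and one is left with $q(a+\lambda_a(a'))=k+l$. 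This cancellation is the crux of the whole argument, and it is exactly where the orthogonality hypothesis is needed.

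With the exponents identified, I would evaluate both sides of (b) on an arbitrary $c=(\vec{z},z_{n+1})$. The $\mathbb{F}_p^n$-components agree automatically, since both equal $f^{k+l}(\vec{z})$. Comparing the last coordinates reduces the identity to
$$
b(\vec{y},f^l(\vec{z}))=b(f^k(\vec{y}),f^{k+l}(\vec{z})).
$$
This is the second point where the hypothesis enters: because $b$ is the bilinear form associated to $Q$ and $f$ preserves $Q$, the map $f$ also preserves $b$, so that $b(f^k(\vec{u}),f^k(\vec{v}))=b(\vec{u},\vec{v})$ for every $k$; applying this with $\vec{v}=f^l(\vec{z})$ yields the displayed equality. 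This finishes (b) and hence shows that $\mathbb{F}_p^{n+1}$ with these lambda maps is a left brace.

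For the last assertion, suppose $\lambda_{(\vec{x},x_{n+1})}=\id$ and put $k=q(\vec{x},x_{n+1})$. Reading off the $\mathbb{F}_p^n$-component forces $f^k=\id$, and since $f$ has order $p$ this gives $k=0$, i.e.\ $x_{n+1}=Q(\vec{x})$. Reading off the last coordinate then forces $b(\vec{x},\vec{y})=0$ for every $\vec{y}$; when $Q$ is non-degenerate its associated bilinear form $b$ is non-degenerate, so $\vec{x}=0$, whence $x_{n+1}=Q(\vec{0})=0$. Thus the socle is trivial. The only subtlety worth flagging is that ``non-degenerate'' has to be read in the sense that $b$ is non-degenerate; beyond that, no step is genuinely hard once the two orthogonality identities above are established.
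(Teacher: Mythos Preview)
Your proof is correct and follows essentially the same route as the paper's own argument: both verify the hypotheses of Lemma~\ref{defLambda}, with the heart of the matter being the identity $q(a+\lambda_a(a'))=q(a)+q(a')$ obtained from $Q(f^k(\vec{y}))=Q(\vec{y})$, together with the invariance of $b$ under $f$ to match the last coordinates. The only difference is cosmetic: the paper carries out the computation in matrix form (writing $\lambda$ as a block matrix and using $F^tBF=B$), whereas you do it coordinate-free; the socle argument is identical in both.
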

\begin{proof}
First of all, observe that $\lambda_{(\vec{x},x_{n+1})}$ is well-defined since 
$q$ takes values in $\mathbb{F}_p$, and $f$ is of order $p$.
Next, in matrix notation, $\lambda$ can be written as
$$
\lambda_{(\vec{x},x_{n+1})}=
\left(\begin{array}{r|r}
F^{q(\vec{x},x_{n+1})}& 0\\
\hline
\vec{x}\cdot B\cdot F^{q(\vec{x},x_{n+1})}& 1\\
\end{array}\right).
$$

We shall prove that this defines a structure of left brace, checking the two properties required in Lemma \ref{defLambda}.
First, $\lambda_{(\vec{x},x_{n+1})}$ is a morphism of $(\mathbb{F}_p^n\times \mathbb{F}_p,+)$ 
by definition, and it is bijective because $f$ is invertible.
Second, we have to check 
$\lambda_{(\vec{x},x_{n+1})}\circ\lambda_{(\vec{y},y_{n+1})}=\lambda_{(\vec{x},x_{n+1})+\lambda_{(\vec{x},x_{n+1})}(\vec{y},y_{n+1})}$.
On one side,
\begin{eqnarray*}
\lambda_{(\vec{x},x_{n+1})}\circ\lambda_{(\vec{y},y_{n+1})}&=&
\left(\begin{array}{r|r}
F^{q(\vec{x},x_{n+1})}& 0\\
\hline
\vec{x}\cdot B\cdot F^{q(\vec{x},x_{n+1})}& 1\\
\end{array}\right)
\left(\begin{array}{r|r}
F^{q(\vec{y},y_{n+1})}& 0\\
\hline
\vec{y}\cdot B\cdot F^{q(\vec{y},y_{n+1})}& 1\\
\end{array}\right)\\
&=&
\left(\begin{array}{c|c}
F^{q(\vec{x},x_{n+1})}F^{q(\vec{y},y_{n+1})}& 0\\
\hline
\vec{x}\cdot B\cdot F^{q(\vec{x},x_{n+1})}F^{q(\vec{y},y_{n+1})}+\vec{y}\cdot B\cdot F^{q(\vec{y},y_{n+1})}& 1\\
\end{array}\right).
\end{eqnarray*}
On the other hand,
\begin{align*}
\lefteqn{\lambda_{(\vec{x},x_{n+1})+\lambda_{(\vec{x},x_{n+1})}(\vec{y},y_{n+1})}}\\
&=
\left(\begin{array}{c|c}
F^{q((\vec{x},x_{n+1})+\lambda_{(\vec{x},x_{n+1})}(\vec{y},y_{n+1}))}& 0\\
\hline
\left(\vec{x}+\vec{y}\cdot (F^t)^{q(\vec{x},x_{n+1})}\right) BF^{q((\vec{x},x_{n+1})+\lambda_{(\vec{x},x_{n+1})}(\vec{y},y_{n+1}))}& 1\\
\end{array}\right),
\end{align*}
where $F^t$ is the transpose of $F$. Thus it only remains to check that 
$q((\vec{x},x_{n+1})+\lambda_{(\vec{x},x_{n+1})}(\vec{y},y_{n+1}))=q(\vec{x},x_{n+1})+q(\vec{y},y_{n+1})$,
because then
\begin{align*}
\lefteqn{\vec{y} \cdot (F^t)^{q(\vec{x},x_{n+1})} BF^{q((\vec{x},x_{n+1})+\lambda_{(\vec{x},x_{n+1})}(\vec{y},y_{n+1}))}}\\
&=\vec{y}\cdot (F^t)^{q(\vec{x},x_{n+1})} BF^{q(\vec{x},x_{n+1})}F^{q(\vec{y},y_{n+1})}=\vec{y} BF^{q(\vec{y},y_{n+1})}.
\end{align*}
Indeed
\begin{align*}
\lefteqn{q((\vec{x},x_{n+1})+\lambda_{(\vec{x},x_{n+1})}(\vec{y},y_{n+1}))}\\
&=q(\vec{x}+f^{q(\vec{x},x_{n+1})}(\vec{y})~,~ x_{n+1}+y_{n+1}+b(\vec{x},f^{q(\vec{x},x_{n+1})}(\vec{y})))\\
&=x_{n+1}+y_{n+1}+b(\vec{x},f^{q(\vec{x},x_{n+1})}(\vec{y}))-Q(\vec{x}+f^{q(\vec{x},x_{n+1})}(\vec{y}))\\
&=x_{n+1}+y_{n+1}+b(\vec{x},f^{q(\vec{x},x_{n+1})}(\vec{y}))-Q(\vec{x})-Q(f^{q(\vec{x},x_{n+1})}(\vec{y}))\\
&\quad -b(\vec{x},f^{q(\vec{x},x_{n+1})}(\vec{y}))\\
&=x_{n+1}+y_{n+1}-Q(\vec{x})-Q(\vec{y})=q(\vec{x},x_{n+1})+q(\vec{y},y_{n+1}),
\end{align*}
and all this implies that we have a structure of left brace.

Finally, suppose that $Q$ is non-degenerate. Note that 
$(\vec{x},x_{n+1})$ is an element of the socle if and only if $f^{q(\vec{x},x_{n+1})}=\id$ and
$b(\vec{y},f^{q(\vec{x},x_{n+1})}(\vec{x}))=0$ for any $\vec{y}$. The combination of this two conditions gives
$b(\vec{y},\vec{x})=0$ for any $\vec{y}$. Since $Q$ is non-degenerate, $\vec{x}=0$, and $f^{q(\vec{x},x_{n+1})}=\id$
implies $0=q(\vec{x},x_{n+1})=x_{n+1}-Q(\vec{x})$ if $f$ has order $p$, which implies $x_{n+1}=0$. In conclusion, if the
quadratic form is non-degenerate, the socle is trivial.
\end{proof}

Recall that, if $p=2$, there exists a non-degenerate quadratic form on $\mathbb{F}_2^n$ if and only if
$n$ is even.

%TODO millorar això de dim parell

\section{Construction of simple braces with $\Z/(p_1)\times (\Z/(p_2))^{n+1}$ as additive group}
To find a finite simple left brace, we need to define matched products between finite $p$-groups, which 
will be the Sylow subgroups of the simple brace. Moreover, note the following proposition. Recall that, 
if $B$ is a finite left brace, then $(B,\cdot)$ is solvable by \cite[Theorem 2.15]{ESS}. Recall also that 
a Hall subgroup $H$ of a finite group $G$ is a subgroup such that $\gcd(\ord{H},~[G:H])=1.$

\begin{proposition}
Let $B$ be a finite left brace. Assume that one of the Hall subgroups $H$ of $(B,\cdot)$ is normal. Then, 
$H$ is an ideal of $B$.
\end{proposition}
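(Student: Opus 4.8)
The plan is to identify $H$ with one of the left ideals $nB$ produced in Example~\ref{nB}, so that the closure of $H$ under the lambda maps comes for free. To show $H$ is an ideal I need two things: that $H$ is normal in $(B,\cdot)$, which is exactly the hypothesis, and that $\lambda_b(x)\in H$ for all $b\in B$ and $x\in H$. The second condition is what requires work, and the quickest route is to realize $H$ as a subset that is already known to be a left ideal.

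First I would set $m'=\ord{H}$ and $m''=[B:H]=\ord{B}/m'$; since $H$ is a Hall subgroup, $\gcd(m',m'')=1$. Example~\ref{nB}, applied to the coprime factorization $\ord{B}=m'\cdot m''$, then yields a left ideal $m''B$ of order exactly $m'$, which in particular is a subgroup of $(B,\cdot)$. Writing $\pi$ for the set of primes dividing $m'$, this subgroup $m''B$ is a $\pi$-subgroup of $(B,\cdot)$, while $H$ is by hypothesis a normal Hall $\pi$-subgroup. The key step is to show $H=m''B$; granting this, $H$ is a left ideal which is also normal, hence an ideal, and the proof is complete.

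To prove $H=m''B$ I would argue purely in the group $(B,\cdot)$, using that a normal Hall $\pi$-subgroup absorbs every $\pi$-subgroup. Concretely, for any $\pi$-subgroup $K$ the product $HK$ is a subgroup because $H$ is normal, and $[HK:H]=[K:K\cap H]$ divides both $\ord{K}$ (a $\pi$-number) and $[B:H]=m''$ (a $\pi'$-number, by coprimality), so it equals $1$ and $K\subseteq H$. Taking $K=m''B$ gives $m''B\subseteq H$, and equality of orders forces $m''B=H$.

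The only non-formal point — and the step I expect to be the main obstacle — is bridging the additive and multiplicative structures of $B$: the ideal $m''B$ is defined through the additive group, whereas $H$ is a subgroup of $(B,\cdot)$, and a priori they have nothing to do with each other. The bridge is the uniqueness of a \emph{normal} Hall subgroup of a given order (it is the full $\pi$-radical of $(B,\cdot)$), which pins down the two subgroups of common order $m'$ as the same subset of $B$. It is worth stressing that normality of $H$ is used twice and is indispensable: both to obtain this uniqueness and for the final passage from left ideal to ideal.
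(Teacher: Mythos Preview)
Your proof is correct and follows the same overall strategy as the paper: identify $H$ with the left ideal $m''B$ from Example~\ref{nB}, and then use normality to upgrade from left ideal to ideal. The one difference is in how you justify $H=m''B$. The paper invokes the solvability of $(B,\cdot)$ (quoting \cite[Theorem~2.15]{ESS}) to conclude that a \emph{normal} Hall subgroup of a given order is the \emph{unique} Hall subgroup of that order, hence must coincide with $m''B$. Your argument instead shows directly that a normal Hall $\pi$-subgroup contains every $\pi$-subgroup via the index computation $[HK:H]=[K:K\cap H]$, which avoids the appeal to solvability altogether. Both routes are valid; yours is slightly more self-contained.
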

\begin{proof}
Suppose that $\ord{H}=n$, and that $\ord{B}/\ord{H}=m$. Since $H$ is a Hall subgroup, 
$\gcd(n,m)=1$. From Example \ref{nB}, we know that $mB$ is a left ideal of $B$ of order $n$. 
Since $H$ is normal and $(B,\cdot)$ is solvable, $H$ is the unique Hall subgroup of $(B,\cdot)$ 
of order $n$, so $(mB,\cdot)=H$, and we are done because a left ideal with normal multiplicative 
group is an ideal. 
\end{proof}
\begin{corollary}
Let $B$ be a finite simple left brace. Then, any Hall subgroup of $(B,\cdot)$ is non-normal.
\end{corollary}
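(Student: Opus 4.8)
The plan is to deduce this immediately from the preceding Proposition by contraposition, so the proof will be a single short paragraph rather than a fresh computation. Concretely, I would argue as follows: suppose toward a contradiction that $B$ is a finite simple left brace possessing a normal Hall subgroup $H$ of $(B,\cdot)$. Then the Proposition applies verbatim and tells us that $H$ is an ideal of $B$. But $B$ is simple, so its only ideals are $0$ and $B$; hence $H=0$ or $H=B$.

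The one point that needs care, and which I would flag explicitly, is the degenerate reading of ``any Hall subgroup.'' The trivial subgroup $0$ (of order $1$) and the whole group $B$ (of index $1$) are always Hall subgroups in the sense of the definition recalled just before the Proposition, since $\gcd(\ord{0},[B:0])=\gcd(1,\ord{B})=1$ and likewise for $B$, and both are normal. So the statement cannot literally mean \emph{every} Hall subgroup; it must be understood as asserting that no \emph{proper nontrivial} Hall subgroup of $(B,\cdot)$ is normal. With that understanding, the conclusion $H\in\{0,B\}$ forced above is exactly the desired contradiction: a proper nontrivial Hall subgroup cannot coincide with either $0$ or $B$.

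There is no real obstacle here, since all the work has already been carried out in the Proposition: the heart of the matter — that a left ideal arising as $mB$ with normal multiplicative group is automatically an ideal, together with the uniqueness of the Hall subgroup of a fixed order in a solvable group — is established there, using in turn the solvability of $(B,\cdot)$ from \cite[Theorem 2.15]{ESS} and the construction of the left ideals $mB$ from Example \ref{nB}. The present statement simply transports that result through the definition of simplicity. For context I would also remark that this corollary is the structural reason why finite simple braces cannot have order a prime power with exponent at least two: by \cite[Corollary after Proposition 8]{Rump} such a brace is never simple, so any nontrivial finite simple brace has Sylow subgroups for at least two distinct primes, and the corollary then says these Sylow subgroups — being nontrivial proper Hall subgroups — must all fail to be normal, which is precisely the phenomenon that the matched-product construction of the later sections is designed to realize.
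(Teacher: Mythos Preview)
Your proposal is correct and matches the paper's approach exactly: the corollary is stated in the paper with no separate proof, being an immediate consequence of the preceding Proposition, and your contrapositive argument is precisely that implicit reasoning. Your observation that the statement must be read as excluding the trivial Hall subgroups $0$ and $B$ is a valid clarification the paper does not make explicit.
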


Thus the multiplicative group of a finite simple left brace can not be any solvable group. 
Therefore we would want $\alpha$ and $\beta$ to be both non-trivial, because it is necessary for the Sylow subgroups to be non-normal
in a simple brace. 

Instead of working in the general setting, we simplify things working in a concrete 
example: we work with left braces with additive group isomorphic to $\Z/(p_1)\times (\Z/(p_2))^{n+1}$, where 
$p_1$ and $p_2$ are different prime numbers. 
Take $H=\Z/(p_1)$ with its only possible brace structure, the trivial brace structure $h\cdot h':=h+h'$ 
(so $\lambda^{(1)}_h=\id$ for any $h\in H$). 
Take $K$ equal to one 
of the braces with trivial socle defined in Section \ref{trivialsocle}. So 
we have $(K,+)\cong (\Z/(p_2))^{n+1}$, and with $F$, $Q$, $b$, $q$ as in Section \ref{trivialsocle}, and then 
$$
\lambda^{(2)}_{(x_1,\dots,x_{n+1})}=
\begin{pmatrix}
F^{q(x_1,\dots,x_{n+1})}& 0\\
(x_1,\dots,x_n)\cdot B\cdot F^{q(x_1,\dots,x_{n+1})}& 1
\end{pmatrix}.
$$

To put together $H$ and $K$, we need two morphisms
$$
\alpha: (K,\cdot)\to \aut(\Z/(p_1))\cong(\Z/(p_1))^{*},
$$
$$
\beta: (H,\cdot)=\Z/(p_1)\to \aut((\Z/(p_2))^{n+1})\cong GL_{n+1}(\Z/(p_2)).
$$
For $\alpha$ to be non-trivial, we must assume $p_2\mid (p_1-1)$ (in particular, $p_1\neq 2$).
On the other hand, to define $\beta$, we only need an automorphism $m=\beta_{\overline{1}}$ of order $p_1$
(which is determined by a matrix $M\in GL_{n+1}(\Z/(p_2))$).
Note that the additive group of the final brace is isomorphic to 
$A=\Z/(p_1)\times (\Z/(p_2))^{n+1}$. Then, by Theorem \ref{product}, if the maps $\alpha$ and $M$
satisfy the properties
\begin{enumerate}
\item $\alpha$ is an action,
\item $\alpha_{m(\vec{x},x_{n+1})}=\alpha_{(\vec{x},x_{n+1})}$ (corresponding to (\ref{property1}) of Definition \ref{matchedpair}),
\item $\lambda^{(2)}_{(\vec{x},x_{n+1})}\circ m^a=m^{\alpha_{(\vec{x},x_{n+1})}(a)}\circ\lambda^{(2)}_{m^{-\alpha_{(\vec{x},x_{n+1})}(a)}(\vec{x},x_{n+1})}$
(corresponding to (\ref{property2}) of Definition \ref{matchedpair}; observe that
the case $a=1$ implies the other cases because $\alpha_{(\vec{x},x_{n+1})}$ is a morphism), 
\end{enumerate}
we have a structure of left brace over $\Z/(p_1)\times(\Z/(p_2))^{n+1}$ given by
\begin{align*}
\lambda_{(a,x_1,\dots,x_{n+1})}&:=
\left(\alpha_{(x_1,\dots,x_{n+1})}\circ\lambda^{(1)}_{\alpha^{-1}_{(x_1,\dots,x_{n+1})}(a)},~
\beta_a\circ\lambda^{(2)}_{\beta^{-1}_a(x_1,\dots,x_{n+1})}\right)\\
&=\left(\alpha_{(x_1,\dots,x_{n+1})}~,~
m^{a}\circ 
\lambda^{(2)}_{m^{-a}(x_1,\dots,x_{n+1})}
\right).
\end{align*}

There are different ways to choose $\alpha$, but the following one works specially well with the brace 
structure of $K$: let $\gamma\in (\Z/(p_1))^*$ be an element of order $p_2$
(recall that we are assuming $p_2\mid (p_1-1)$). Then, define $\alpha_{(x_1,\dots,x_{n+1})}:=\gamma^{q(x_1,\dots,x_{n+1})}$.
So the second condition $\alpha_{m(\vec{x},x_{n+1})}=\alpha_{(\vec{x},x_{n+1})}$ is now equivalent 
to $q(m(\vec{x},x_{n+1}))=q(\vec{x},x_{n+1})$.
Moreover, assume that 
$M=\begin{pmatrix}
C& 0\\
z& 1
\end{pmatrix}$ to simplify the argument. Then,
$$
\lambda_{(a,\vec{x},x_{n+1})}:=\left(\gamma^{q(\vec{x},x_{n+1})}~,~
\begin{pmatrix}
C& 0\\
z& 1
\end{pmatrix}^{a}
\begin{pmatrix}
F^{q(\vec{x},x_{n+1})}& 0\\
\vec{x}(C^t)^{-a}BF^{q(\vec{x},x_{n+1})}& 1
\end{pmatrix}
\right).
$$
With these assumptions, the conditions to be a left brace become
\begin{enumerate}[(i)]
\item $\begin{pmatrix}C& 0\\ z& 1\end{pmatrix}$ has order $p_1$; equivalently, $C$ has order $p_1$ and $z+zC+\cdots+zC^{p_1-1}=0$.
\item $\gamma$ is an element of order $p_2$ in $(\Z/(p_1))^*$.
\item $Q(c(\vec{x}))=Q(\vec{x})+z\cdot \vec{x}^t$: Since the second condition 
$$q(m(\vec{x},x_{n+1}))=q(\vec{x},x_{n+1})$$ with 
$M=\begin{pmatrix}
C& 0\\
z& 1
\end{pmatrix}$ becomes equivalent to 
$$x_{n+1}+z\vec{x}^t-Q(c(\vec{x}))=q(c(\vec{x}),x_{n+1}+z\vec{x}^t)=q(\vec{x},x_{n+1})=x_{n+1}-Q(\vec{x}).$$
Besides, note that the fact that $c$ has order $p_1$ and the fact that $Q(c(\vec{x}))=Q(\vec{x})+z\cdot \vec{x}^t$
together implies the condition $z+zC+\cdots+zC^{p_1-1}=0$, because $Q(\vec{x})=Q(c^{p_1}(\vec{x}))=
Q(\vec{x})+z\vec{x}+zC\vec{x}^t+\cdots+zC^{p_1-1}\vec{x}^t.$ So we do not need to assume this condition in (i).

\item $z=(z+zC+\cdots+zC^{\gamma-1})F$, and $F C=C^{\gamma}F$: The third condition
$$
\lambda^{(2)}_{(\vec{x},x_{n+1})}\circ m=m^{\alpha_{(\vec{x},x_{n+1})}(1)}\circ\lambda^{(2)}_{m^{-\alpha_{(\vec{x},x_{n+1})}(1)}(\vec{x},x_{n+1})}
$$ is now equivalent to 
\begin{align*}
\lefteqn{\begin{pmatrix}
F^{q(\vec{x},x_{n+1})}& 0\\
\vec{x}\cdot B\cdot F^{q(\vec{x},x_{n+1})}& 1
\end{pmatrix}
\begin{pmatrix}
C& 0\\
z& 1
\end{pmatrix}}\\
&=
\begin{pmatrix}
C& 0\\
z& 1
\end{pmatrix}^{\gamma^{q(\vec{x},x_{n+1})}}
\begin{pmatrix}
F^{q(\vec{x},x_{n+1})}& 0\\
\vec{x}\cdot (C^t)^{-\gamma^{q(\vec{x},x_{n+1})}}\cdot B\cdot F^{q(\vec{x},x_{n+1})}& 1
\end{pmatrix},
\end{align*}
which is equivalent, multiplying the matrices, to 
\begin{align*}
F^{q(\vec{x},x_{n+1})}C&=C^{\gamma^{q(\vec{x},x_{n+1})}}F^{q(\vec{x},x_{n+1})},\text{ and}\\
\vec{x}\cdot B\cdot F^{q(\vec{x},x_{n+1})}C+z&=(z+zC+\cdots +zC^{\gamma^{q(\vec{x},x_{n+1})}-1})F^{q(\vec{x},x_{n+1})}\\
&\quad +\vec{x}\cdot (C^t)^{-\gamma^{q(\vec{x},x_{n+1})}}\cdot B\cdot F^{q(\vec{x},x_{n+1})}.
\end{align*}
First, note that $FC=C^{\gamma}F$ implies $F^{x}C=C^{\gamma^{x}}F^{x}$ for any $x$. 
Next, $\vec{x}\cdot (C^t)^{-\gamma^{q(\vec{x},x_{n+1})}}\cdot B\cdot F^{q(\vec{x},x_{n+1})}=
\vec{x}\cdot B\cdot C^{\gamma^{q(\vec{x},x_{n+1})}}\cdot F^{q(\vec{x},x_{n+1})}= 
\vec{x}\cdot B\cdot F^{q(\vec{x},x_{n+1})}\cdot C$, because $C^tBC=B$ and using the first condition;
hence the second equality becomes 
$$
z=(z+zC+\cdots +zC^{\gamma^{q(\vec{x},x_{n+1})}-1})F^{q(\vec{x},x_{n+1})}.
$$
This condition can be derived from the other ones, because
$$
Q(cf^{-y}(\vec{x}))=Q(f^{-y}(\vec{x}))+z(f^{-y}(\vec{x}))^t=Q(\vec{x})+zF^{-y}\vec{x}^t
$$
and
\begin{align*}
Q(cf^{-y}(\vec{x}))&=Q(f^{-y}c^{\gamma^y}(\vec{x}))=Q(c^{\gamma^y}(\vec{x}))\\
&=Q(\vec{x})+(z+zC+\cdots+zC^{\gamma^y-1})\vec{x}^t.
\end{align*}
\end{enumerate}

We summarize all this information in the following theorem, where we also check when these 
left braces are simple. Recall the notation of lower-case letters for linear maps, and 
capital letters for their associated matrices.

\begin{theorem}\label{condSimple}
Let $p_1$ and $p_2$ be two prime numbers such that $p_2\mid p_1-1$. Let $n$ be a positive integer. 
Let $Q$ be a non-degenerate quadratic form over $(\Z/(p_2))^{n}$. Let $f$ be an element of order $p_2$ of 
the orthogonal group defined by $Q$. Assume that we are given an element $\gamma$ of $(\Z/(p_1))^*$, 
a matrix $C$ of $GL_{n}(\Z/(p_2))$, and $z\in (\Z/(p_2))^n$, satisfying the properties
\begin{enumerate}[(a)]
\item $\gamma$ has order $p_2$, and $C$ has order $p_1$,
\item $Q(c(\vec{x}))=Q(\vec{x})+z\cdot \vec{x}^t$, and
\item $F C=C^{\gamma}F$.
\end{enumerate} 
Then, there exists a brace with additive group isomorphic to $\Z/(p_1)\times(\Z/(p_2))^{n+1}$, given 
by the lambda maps
$$
\lambda_{(a,\vec{x},x_{n+1})}:=\left(\gamma^{q(\vec{x},x_{n+1})}~,~
\begin{pmatrix}
C& 0\\
z& 1
\end{pmatrix}^{a}
\begin{pmatrix}
F^{q(\vec{x},x_{n+1})}& 0\\
\vec{x}(C^t)^{-a}BF^{q(\vec{x},x_{n+1})}& 1
\end{pmatrix}
\right).
$$
In particular, for $p_2=2$, we must assume $n$ even, and, for $p_2\neq 2$, we always have $z=0$.

Moreover, this brace is simple if and only if 
$\im(c-\id)+\im(f-\id)=(\Z/(p_2))^n.$
In particular, a sufficient condition to be simple is that $c-\id$ is invertible.  
\end{theorem}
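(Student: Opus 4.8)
The plan is to determine all ideals of $B$ and read off when only $0$ and $B$ survive. Write an element of $B$ as $(a,\vec{x},x_{n+1})$ with $a\in\Z/(p_1)$ and $(\vec{x},x_{n+1})\in(\Z/(p_2))^{n+1}$, put $V=(\Z/(p_2))^n$, $e_{n+1}=(\vec 0,1)$, and abbreviate $U:=\im(c-\id)+\im(f-\id)\subseteq V$. I would first record the working characterization: a subset $I$ is an ideal iff $(I,+)$ is a subgroup, $\lambda_b(I)\subseteq I$ for all $b$, and $\lambda_i(b)-b\in I$ for all $i\in I$, $b\in B$. The first two conditions make $I$ a left ideal (in particular an additive subgroup), and the third encodes normality: writing $j:=\lambda_i(b)-b\in I$ one has $i\cdot b=i+b+j$, whence $b^{-1}\cdot i\cdot b=\lambda_{b^{-1}}(i)+\lambda_{b^{-1}}(j)\in I$ because $\lambda_{b^{-1}}(b)=-b^{-1}$ and $I$ is $\lambda$-invariant. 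Since $\gcd(p_1,p_2)=1$, any such additive $I$ splits along the Sylow decomposition of $(B,+)$ as $I=I_1\oplus I_2$ with $I_1\in\{0,\Z/(p_1)\}$ and $I_2\le(\Z/(p_2))^{n+1}$.

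For ``$U\neq V\Rightarrow B$ not simple'' I would exhibit a proper nonzero ideal. Using $FC=C^\gamma F$ one checks $U$ is invariant under $c$ and $f$ (for instance $f(c-\id)=(c^\gamma-\id)f$ gives $f\,\im(c-\id)\subseteq\im(c^\gamma-\id)\subseteq\im(c-\id)$). Then $I=\Z/(p_1)\times U\times\Z/(p_2)$ is an ideal: $\lambda$-invariance follows from the $c$- and $f$-invariance of $U$, and for normality a direct expansion shows the $V$-component of $\lambda_i(b)-b$ equals $(c^\alpha f^{q}-\id)\vec{y}$, which lies in $U$ via $c^\alpha f^{q}-\id=c^\alpha(f^{q}-\id)+(c^\alpha-\id)$, while the other two components land automatically in $\Z/(p_1)$ and $\Z/(p_2)$. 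This $I$ is always nonzero and is proper exactly when $U\neq V$.

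For the converse ``$U=V\Rightarrow$ simple'' I take a nonzero ideal $I=I_1\oplus I_2$ and climb to $B$ in four moves. (i) If $I_2=0$ then $I_1=\Z/(p_1)$, and the normality relation at $i=(1,\vec 0,0)$ forces $(c-\id)\vec{y}\in I_2=0$ for all $\vec{y}$, i.e. $c=\id$, impossible since $c$ has order $p_1$; hence $I_2\neq0$. (ii) For $(\vec{w},w_{n+1})\in I_2$, applying $\lambda$-invariance to $b=(0,\vec{x},Q(\vec{x}))$ (which has $q=0$) yields $(0,\,b(\vec{x},\vec{w}))\in I_2$ for every $\vec{x}$; if some element of $I_2$ has nonzero $V$-part then non-degeneracy of the polar form $b$ (valid because $Q$ is non-degenerate) gives $e_{n+1}\in I_2$, and otherwise $I_2$ is the nonzero subspace of the line $\Z/(p_2)e_{n+1}$, so again $e_{n+1}\in I_2$. (iii) The normality relation at $i=e_{n+1}$ gives $\lambda_{e_{n+1}}(b)-b=((\gamma-1)a',(f-\id)\vec{y},0)$; since $\gamma\neq1$ and $p_1$ is prime $\gamma-1$ is invertible, so the first coordinate runs over all of $\Z/(p_1)$, forcing $I_1=\Z/(p_1)$, while the rest gives $\im(f-\id)\subseteq I_2$. (iv) Now $(1,\vec 0,0)\in I$, and its normality relation gives $((c-\id)\vec{y},z\vec{y}^{\,t})\in I_2$; subtracting $(z\vec{y}^{\,t})e_{n+1}\in I_2$ leaves $\im(c-\id)\subseteq I_2$. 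Combining, $I_2\supseteq(\im(c-\id)+\im(f-\id))\times\{0\}=U\times\{0\}=V\times\{0\}$ together with $e_{n+1}$, so $I_2$ is the full Sylow $p_2$-subgroup and $I=B$.

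The stated sufficient condition is then immediate: if $c-\id$ is invertible then $\im(c-\id)=V$, so $U=V$. The main obstacle I anticipate is bookkeeping rather than conceptual: correctly expanding $\lambda_i(b)-b$ in the matched-product coordinates (the interaction of the powers $M^\alpha$, $F^{q}$ and the scalar $\gamma^{q}$) and then extracting from the \emph{single} normality relation the simultaneous constraints that pin $I_1=\Z/(p_1)$ and push both $\im(f-\id)$ and $\im(c-\id)$ into $I_2$. Recognizing $U=V$ as precisely the condition that closes this climb is the conceptual crux.
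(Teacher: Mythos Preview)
Your argument is correct and uses the same ingredients as the paper's proof: the reformulation of normality as $(\lambda_i-\id)(b)\in I$, the specific computations of $\lambda_{(1,\vec 0,0)}$ and $\lambda_{(0,\vec 0,1)}$, and the identification of $U=\im(c-\id)+\im(f-\id)$ as the governing subspace. The organization differs slightly: the paper first shows that every nonzero ideal contains $(1,\vec 0,0)$, then computes the ideal \emph{generated} by this element (arguing that repeated application of conditions (1), (2), (3') produces exactly $\Z/(p_1)\times U\times\Z/(p_2)$, and that ``Condition (3') does not add new elements''), so both directions of the ``iff'' fall out of a single computation. You instead treat the two directions separately, directly verifying that $\Z/(p_1)\times U\times\Z/(p_2)$ is an ideal for the ``not simple'' direction and doing the upward climb for the ``simple'' direction. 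Your version is arguably cleaner on the ``not simple'' side, since checking ideal axioms for a candidate is more transparent than arguing that a generation process terminates; the paper's version is marginally more economical in that the generated-ideal computation serves double duty. Either way, the substance is the same.
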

\begin{proof}
We have already checked that the first conditions guarantee a structure of left brace 
with those lambda maps. 
In particular, for $p_2\neq 2$, these conditions imply that $z$ is always zero because 
$b(c(\vec{x}),c(\vec{y}))=Q(c(\vec{x}+\vec{y}))-Q(c(\vec{x}))-Q(c(\vec{x}))=
Q(\vec{x}+\vec{y})+z(\vec{x}+\vec{y})^t-Q(\vec{x})-z\vec{x}^t-Q(\vec{y})-z\vec{y}^t=
b(\vec{x},\vec{y}),$ and this implies $Q(\vec{x})+z\vec{x}^t=Q(c(\vec{x}))=\frac{1}{2}b(c(\vec{x}),c(\vec{x}))=
\frac{1}{2}b(\vec{x},\vec{x})=Q(\vec{x})$, thus $z=0$.

Now to find the conditions for these braces to be simple, recall that an ideal $I$ of a left brace $B$ satisfies:
\begin{enumerate}[(1)]
\item $h_1+h_2\in I$ for all $h_1,h_2\in I$
\item $\lambda_g(h)\in I$ for all $g\in B$, $h\in I$
\item $ghg^{-1}\in I$ for all $g\in B$, $h\in I$. But $ghg^{-1}=\lambda_g(\lambda_h(g^{-1})-g^{-1}+h)$, and
a combination of (1) and (2) gives the equivalent condition:
\item[(3')] $(\lambda_h-\id)(g)\in I$ for all $h\in I$, $g\in B$
\end{enumerate}

So take $I\neq 0$ any ideal of $B$. To prove that $B$ is simple, we want to find conditions that ensures that $I=B$.
Since $I\neq 0$, there exists $(a,\vec{x},x_{n+1})\in I\setminus\{0\}$. If $a\neq 0$, then, since $p_1\neq p_2$,
$n(a,\vec{x},x_{n+1})=(1,0,\cdots,0)\in I$ for some $n$. If $a=0$, then $\lambda_{(b,\vec{y},y_{n+1})}(0,\vec{x},x_{n+1})\in I$
for any $(b,\vec{y},y_{n+1})\in B$, and we take $b=0$, $y_{n+1}=Q(y_1,\dots,y_n)$:
\begin{align*}
\lambda_{(0,\vec{y},Q(\vec{y}))}(0,\vec{x},x_{n+1})&=
\left(\left(
1,
\begin{pmatrix}
\Id& 0\\
\vec{y}\cdot B& 1
\end{pmatrix}
\right)
\begin{pmatrix}
0\\
\vec{x}^t\\
x_{n+1}
\end{pmatrix}\right)^t\\
&=
\begin{pmatrix}
0\\
\vec{x}^t\\
\vec{y}\cdot B\cdot \vec{x}^t+x_{n+1}
\end{pmatrix}^t\in I.
\end{align*}
Then
$(0,\vec{x},\vec{y}B\vec{x}^t+x_{n+1})-(0,\vec{x},x_{n+1})=(0,\vec{0},\vec{y}B\vec{x}^t)\in I$.
If $\vec{x}\neq 0$, then we choose $\vec{y}$ such that $\vec{y}\cdot B\cdot\vec{x}^t\neq 0$ 
(recall that $Q$ is non-degenerate),
so $(0,\vec{0},1)\in I$. But if $(0,\vec{0},1)\in I$, then
$$
(\lambda_{(0,\vec{0},1)}-\id)(1,\vec{0},0)=(\gamma-1,\vec{0},0),
$$
hence, since $\gamma\neq 1$, $(1,\vec{0},0)\in I$.
In conclusion, if $I\neq 0$, then always $(1,\vec{0},0)\in I$.

So our question reduces to: Does $(1,\vec{0},0)\in I$ imply $I=B$? If $(1,\vec{0},0)\in I$, then
$$
(\lambda_{(1,\vec{0},0)}-\id)(0,\vec{y},0)=
\left(\left(0,
\begin{pmatrix}
C-\Id& 0\\
z& 0
\end{pmatrix}
\right)
\begin{pmatrix}
0\\
\vec{y}^t\\
0
\end{pmatrix}\right)^t=
(0,(c-\id)(\vec{y}),z\vec{y}^t)\in I,
$$
choosing $\vec{y}$ such that $(c-\id)(\vec{y})\neq 0$ (recall that $C$ has order $p_1\neq 1$). 
Then, as in the last paragraph, we can conclude $(0,\vec{0},1)\in I$.
So we only have to check that $(0,\vec{x},0)\in I$ for any $\vec{x}$. With $(\lambda_{(a,\vec{0},0)}-\id)(b,\vec{y},y_{n+1})\in I$
and $(\lambda_{(0,\vec{0},x)}-\id)(b,\vec{y},y_{n+1})\in I$ for all $(b,\vec{y},y_{n+1})$, we see that
$(0,(c^a-\id)(\vec{y}),0)\in I$ and $(0,(f^x-\id)(\vec{y}),0)\in I$ for all $a$, $x$, and $\vec{y}$.
Condition (2) for ideals implies that the multiplication of any of these elements by $(1,F,1)$ or by $(1,C,1)$ is again an element of
$I$. Thus we obtain
$$
(0,(W_1(f,c)-W_2(f,c))(\vec{y}),0)\in I,
$$
for any $\vec{y}$, and every $W_1$, $W_2$ words in $f$ and $c$.
Note that Condition (3') does not add new elements to this set. So $(1,\vec{0},0)$, $(0,\vec{0},1)$, these elements, and all the possible
linear combinations of all of them (Condition (1)), are all the elements of $I$.

Thus the condition to be simple is that the elements 
$$\{(W_1(f,c)-W_2(f,c))(\vec{y}):\text{ for all }\vec{y}\text{, and for all words }W_1,W_2\}$$ generate
the vector space $(\Z/(p_2))^n$. But recall that $CF=FC^{\gamma^{-1}}$
Thus the condition to be simple is that the elements 
$$\{(f^\alpha c^\beta-f^\gamma c^\delta)(\vec{y}):\text{ for all } \alpha,\beta,\gamma,\delta,\vec{y}\}$$ generate
the vector space $(\Z/(p_2))^n$.  Or, equivalently, the elements of 
$$\{(f^\alpha c^\beta-\id)(\vec{y}):\text{ for all } \alpha,\beta,\vec{y}\}$$ generate $(\Z/(p_2))^n$.
Note that 
$(f^\alpha c^\beta-\id)(\vec{y})=(f^\alpha -\id)c^\beta(\vec{y})+(c^\beta-\id)(\vec{y})\in\im(f-\id)+\im(c-\id),$
since $f^\alpha-\id=(f-\id)(f^{\alpha-1}+\cdots +\id)$, and similarly for $c^\beta-\id$. Therefore, 
$$\left\langle (f^\alpha c^\beta-\id)(\vec{y}):\text{ for all } \alpha,\beta,\vec{y}\right\rangle_+=\im(f-\id)+\im(c-\id).$$
Summarizing, our brace is simple if and only if $\im(f-\id)+\im(c-\id)=(\Z/(p_2))^n.$
In particular, if $c-\id$ is invertible, $\im(c-\id)=(\Z/(p_2))^n,$ and the condition is fulfilled. This finishes the proof. 
\end{proof}

\section{Concrete examples}
In the last section, we have found sufficient conditions to have a simple brace. 
In this section, we show explicit examples of matrices and vectors satisfying the 
conditions of Theorem \ref{condSimple}, providing the first known examples of 
non-trivial simple left braces. 

\subsection{Examples of order $p_1p_2^{p_1}$}

In this section,
we construct an explicit example of simple left brace of order $p_1p_2^{p_1}$ 
for each pair of prime numbers $p_1$ and $p_2$ such that $p_2\mid (p_1-1)$. Observe that 
all the examples are of order $p_1^ap_2^b$, and that we also obtain 
examples of odd order, so the behaviour of simple left braces is very different to simple groups. 

Let $n=(p_1-1)/p_2\in\Z.$ Let $\alpha$ be a primitive element of $(\Z/(p_1))^{*}$, and take $\gamma:=\alpha^{n}$, 
which is an element of order $p_2$ in $(\Z/(p_1))^{*}$. Moreover, take
$$
C=\begin{pmatrix}
0& 0& \cdots& 0& -1\\
1& 0& \cdots& 0& -1\\
0& \ddots& \ddots& \vdots& \vdots\\
\vdots& \ddots& \ddots& 0& -1\\
0&\cdots& 0& 1& -1
\end{pmatrix}\in GL_{p_1-1}(\Z/(p_2)),
$$
which is a matrix with characteristic polynomial equal to $x^{p_1-1}+\cdots +x+1$, so it has order $p_1$. 
We look for a quadratic form $Q$ and a matrix $F$ to satisfy all the conditions 
of Theorem \ref{condSimple}. One quadratic form that works well is $$Q(\vec{x})=\sum_{1\leq i<j\leq p_1-1} x_ix_j.$$
$Q$ is a non-degenerate quadratic form because its associated bilinear form in matrix form is 
$$
B=\begin{pmatrix}
0& 1& \cdots &1\\
1& \ddots & \ddots & \vdots\\
\vdots& \ddots & \ddots & 1\\
1&\cdots & 1& 0
\end{pmatrix}\in GL_{p_1-1}(\Z/(p_2)),
$$
which has determinant $\det(B)=(-1)^{p_1}(p_1-2)\equiv 1\pmod{p_2}$, thus it is in\-ver\-ti\-ble.

Any element $(\Z/(p_1))^{*}$ can be written as $\alpha^i\gamma^j$ for a unique $i\in\{0,1,\dots,n-1\}$, and 
a unique $j\in\{0,1,\dots,p_2-1\}$. Hence there exists a bijective map $\pi:(\Z/(p_1))^{*}\to \{1,2,\dots,p_1-1\}\subset \Z$, 
where $\pi(\alpha^i\gamma^j)$ is equal to the unique representative of the class of $\alpha^i\gamma^j$
between $1$ and $p_1-1$. 
Now consider the permutation 
$$\theta:=\prod_{i=0}^{n-1} (\pi(\alpha^i),\pi(\alpha^i\gamma),\dots,\pi(\alpha^i\gamma^{p_2-1}))\in S_{p_1-1},$$
and take $F$ equal to the associated permutation matrix of $\theta$ in $GL_{p_1-1}(\Z/(p_2))$. Since 
$\theta$ is a product of disjoint cycles of length $p_2$, $F$ has order $p_2$. 
We have to check that $F$ is an element of the orthogonal group of $Q$. 
When $p_2\neq 2$, it is enough to check that $F^tBF=B$. Note that 
$$
B=
\begin{pmatrix}
1& \cdots & 1\\
\vdots &  &\vdots\\
1& \cdots & 1
\end{pmatrix}-\Id_{p_1-1}.
$$
Hence,
$$
F^tBF=
F^t\begin{pmatrix}
1& \cdots & 1\\
\vdots &  &\vdots\\
1& \cdots & 1
\end{pmatrix}F-F^t\cdot \Id_{p_1-1}\cdot F=
\begin{pmatrix}
1& \cdots & 1\\
\vdots &  &\vdots\\
1& \cdots & 1
\end{pmatrix}-\Id_{p_1-1}=B,
$$
because $\begin{pmatrix}
1& \cdots & 1\\
\vdots &  &\vdots\\
1& \cdots & 1
\end{pmatrix}$ does not change when we multiply it by a permutation matrix, and $F^{-1}=F^t$ for permutation matrices. 
On the other hand, when $p_2=2$, we have to check that $Q(f(\vec{x}))=Q(\vec{x})$ for any $\vec{x}\in (\Z/(2))^{p_1-1}$. 
In this case, $\gamma=-1$ and 
$$
F=\begin{pmatrix}
0&\cdots & 0 & 1\\
\vdots & \iddots & \iddots & 0\\
0&\iddots & \iddots& \vdots\\
1& 0& \cdots &0
\end{pmatrix},
$$
and therefore, 
\begin{align*}
Q(f(\vec{x}))&=\sum_{1\leq i<j\leq p_1-1} x_{p_1-i}x_{p_1-j}=\sum_{1\leq i<j\leq p_1-1} x_{p_1-j}x_{p_1-i}\\
&=\sum_{p_1-1\geq p_1-i>p_1-j\geq 1} x_{p_1-j}x_{p_1-i}=\sum_{1\leq i'<j'\leq p_1-1} x_{i'}x_{j'}=Q(\vec{x}).
\end{align*}

Now, we have to check the properties of $C$ with respect to $F$ and $Q$. First, we have to prove that 
$FCF^{-1}=C^{\pi(\gamma)}$. Let $\{e_1,e_2,\dots,e_{p_1-1}\}$ be the canonical basis of $(\Z/(p_2))^{p_1-1}$.
Take any $k\in\{1,\dots,p_1-1\}$. 
Recall that there exists an element $\alpha^i\gamma^j$ of $(\Z/(p_1))^{*}$ such that $\pi(\alpha^i\gamma^j)=k$. 
We can describe $F$ and $C$ as linear maps. $F$ is the linear map that acts over the canonical basis
as $f(e_{\pi(\alpha^i\gamma^j)})=e_{\pi(\alpha^i\gamma^{j+1})}$. 
$C$ can be described as the linear map that acts over the canonical basis as $c(e_{k})=e_{k+1}$ if $k\neq p_1-1$, and 
$c(e_{p_1-1})=-e_1-e_2- \dots -e_{p_1-1}$. Hence, we have to consider three different cases:
\begin{enumerate}[(1)]
\item If $k=\pi(\alpha^i\gamma^j)< p_1-\pi(\gamma),$ then, on one hand,
\begin{align*}
fcf^{-1}(e_k)&=fcf^{-1}(e_{\pi(\alpha^i\gamma^j)})=fc(e_{\pi(\alpha^i\gamma^{j-1})})=f(e_{\pi(\alpha^i\gamma^{j-1})+1})\\
&=f(e_{\pi(\alpha^i\gamma^{j-1}+1)})=e_{\pi(\alpha^i\gamma^{j}+\gamma)}=e_{k+\pi(\gamma)},
\end{align*}
and, on the other hand,
\begin{align*}
c^{\pi(\gamma)}(e_k)=c^{\pi(\gamma)-1}(e_{k+1})=(\dots)=e_{k+\pi(\gamma)}.
\end{align*}

\item If $k=\pi(\alpha^i\gamma^j)=p_1-\pi(\gamma)$ , we have $\alpha^i\gamma^{j-1}=\alpha^i\gamma^{j}\gamma^{-1}\equiv k
\gamma^{-1}\equiv -\gamma\gamma^{-1}=-1\pmod{p_1}$, so $\pi(\alpha^i\gamma^{j-1})=p_1-1$. 
Then, on one hand,
\begin{align*}
fcf^{-1}(e_k)&=fcf^{-1}(e_{\pi(\alpha^i\gamma^j)})=fc(e_{\pi(\alpha^i\gamma^{j-1})})=fc(e_{p_1-1})\\
&=f(-e_1-e_2-\dots-e_{p_1-1})=-e_1-e_2-\dots-e_{p_1-1},
\end{align*}
(using in the last equality that a permutation matrix fixes the vector $e_1+e_2+\dots+e_{p_1-1}$)
and, on the other hand,
\begin{align*}
c^{\pi(\gamma)}(e_k)&=c^{\pi(\gamma)-1}(e_{k+1})=(\dots)=c(e_{k+\pi(\gamma)-1})=c(e_{p_1-\pi(\gamma)+\pi(\gamma)-1})\\
&=c(e_{p_1-1})=-e_1-e_2-\dots -e_{p_1-1}.
\end{align*}
\item If $k=\pi(\alpha^i\gamma^j)>p_1-\pi(\gamma)$, since $k\leq p_1-1$, we have $k+\pi(\gamma)-p_1\leq\pi(\gamma)-1$. 
Then, on one hand,
\begin{align*}
fcf^{-1}(e_k)&=fcf^{-1}(e_{\pi(\alpha^i\gamma^j)})=fc(e_{\pi(\alpha^i\gamma^{j-1})})=f(e_{\pi(\alpha^i\gamma^{j-1})+1})\\
&=f(e_{\pi(\alpha^i\gamma^{j-1}+1)})=e_{\pi(\alpha^i\gamma^{j}+\gamma)}=e_{k+\pi(\gamma)-p_1}.
\end{align*}
On the other hand, if $x=p_1-1-k$, we obtain
\begin{align*}
c^{\pi(\gamma)}(e_k)&=c^{\pi(\gamma)-1}(e_{k+1})=(\dots)=c^{\pi(\gamma)-x}(e_{k+x})=c^{\pi(\gamma)-p_1+1+k}(e_{p_1-1})\\
&=c^{\pi(\gamma)-p_1+k}(-e_{1}-e_2-\dots -e_{p_1-1})=c^{\pi(\gamma)-p_1+k-1}(e_{1})\\
&=(\cdots)=e_{\pi(\gamma)-p_1+k}.
\end{align*}
\end{enumerate}
Since $fcf^{-1}(e_k)=c^{\pi(\gamma)}(e_k)$ for any $e_k$ in the canonical basis, we conclude that $FCF^{-1}=C^{\pi(\gamma)}$. 

Next, define the integer $m:=\binom{p_1-1}{2}=(p_1-2)\frac{p_1-1}{2}$, and take $$z:=(0,\dots,0,\overline{m})\in (\Z/(p_2))^{p_1-1}.$$ Then
\begin{align*}
Q(c(\vec{x}))&=\sum_{j=2}^{p_1-1} (-x_{p_1-1})(x_{j-1}-x_{p_1-1})+\sum_{1<i<j\leq p_1-1} (x_{i-1}-x_{p_1-1})(x_{j-1}-x_{p_1-1})\\
 &=\sum_{j=2}^{p_1-1} (-x_{p_1-1}x_{j-1}+x_{p_1-1}^2)+\sum_{1<i<j\leq p_1-1} (x_{i-1}x_{j-1}-x_{i-1}x_{p_1-1}\\
 &\quad -x_{p_1-1}x_{j-1}+x_{p_1-1}^2)\\
 &=\sum_{1\leq i<j\leq p_1-1} x_{p_1-1}^2-\sum_{j=2}^{p_1-1} x_{j-1}x_{p_1-1}+\sum_{1<i<j\leq p_1-1} x_{i-1}x_{j-1}\\
 &\quad -\sum_{1<i<j\leq p_1-1}( x_{i-1}x_{p_1-1}+x_{p_1-1}x_{j-1}).
\end{align*}
When $p_2=2$, using that $\sum_{1\leq i<j\leq p_1-1} 1=\binom{p_1-1}{2}= m$, that $x^2=x$ for any $x\in\Z/(2)$, and that 
\begin{align*}
\sum_{1<i<j\leq p_1-1}( x_{i-1}x_{p_1-1}+x_{p_1-1}x_{j-1})&=x_{p_1-1}\cdot\sum_{i=1}^{p_1-2}\sum_{j=i+1}^{p_1-2}(x_{i}+x_{j})\\
&=x_{p_1-1}\cdot (p_1-3)\cdot\sum_{k=1}^{p_1-2} x_k=0,
\end{align*} we get
\begin{align*}
Q(c(\vec{x}))&=\binom{p_1-1}{2} x_{p_1-1}^2+\sum_{i=2}^{p_1-1} x_{i-1}x_{p_1-1}+\sum_{1<i<j\leq p_1-1} x_{i-1}x_{j-1}\\
&=\overline{m} x_{p_1-1}+\sum_{1\leq i<j\leq p_1-1} x_i x_j=z\cdot\vec{x}^t+Q(\vec{x}).
\end{align*}
When $p_2\neq 2$, note that $m=(p_1-2)\frac{p_1-1}{2}\equiv 0\pmod{p_2}$ because $p_1-1$ is divisible by $p_2$. 
Hence, $z=0$, and the term $\sum_{1\leq i<j\leq p_1-1} x_{p_1-1}^2=\overline{m}x_{p_1-1}^2=0$ disappears. 
Moreover, 
\begin{align*}
-\sum_{1<i<j\leq p_1-1}( x_{i-1}x_{p_1-1}+x_{p_1-1}x_{j-1})&=-x_{p_1-1}\cdot (p_1-3)\cdot\sum_{k=1}^{p_1-2} x_k\\
&=2\cdot x_{p_1-1}\cdot\sum_{k=1}^{p_1-2} x_k,
\end{align*}
so we get 
\begin{align*}
Q(c(\vec{x}))&=-\sum_{j=2}^{p_1-1} x_{j-1}x_{p_1-1}+\sum_{1<i<j\leq p_1-1} x_{i-1}x_{j-1}\\
 &\quad +2x_{p_1-1}\cdot\sum_{k=1}^{p_1-2} x_k\\
 &=\sum_{j=2}^{p_1-1} x_{j-1}x_{p_1-1}+\sum_{1\leq i<j\leq p_1-2} x_{i}x_{j}\\
 &=\sum_{1\leq i<j\leq p_1-1} x_{i}x_{j}=Q(\vec{x}).
\end{align*}

Finally, $C$ has no eigenvector of eigenvalue $1$, since its characteristic polynomial is $x^{p_1-1}+\cdots+x+1$; hence 
$C-\Id$ is invertible. Since all the sufficient conditions of Theorem \ref{condSimple} are fullfilled, we have a simple left brace
of order $p_1\cdot p_2^{p_1}$ for every pair of prime numbers $p_1, p_2$ such that $p_2\mid p_1-1$.

\begin{example}
If $p_2=3$ and $p_1=7$, then $\alpha=3$ and $\gamma=2$, so the permutation is 
$(1,2,4)(3,6,5)$. The corresponding permutation matrix is
$$
F=\left(\begin{array}{rrrrrr}
0 & 0 & 0 & 1 & 0 & 0\\
1 & 0 & 0 & 0 & 0 & 0\\
0 & 0 & 0 & 0 & 1 & 0\\
0 & 1 & 0 & 0 & 0 & 0\\
0 & 0 & 0 & 0 & 0 & 1\\
0 & 0 & 1 & 0 & 0 & 0
\end{array}\right)\in GL_6(\Z/(3)).
$$
\end{example}

\begin{remark}\label{semisimple}
In particular, the last argument ensures us that 
there exists a simple left brace $A$ with additive group isomorphic to $\Z/(3)\times(\Z/(2))^3$.
The multiplicative group of this brace is isomorphic to $S_4$, with isomorphism given by
$S_4\to (A,\cdot)$, $(1,2,3)\mapsto (1,(0,0,0))$ and $(3,4)\mapsto (0,(1,1,0))$.

Consider the direct product of left braces $A\times A$. Then, the only ideals of this brace
are $0$, $A\times \{0\}$, $\{0\}\times A$, and $A\times A$: take $I\neq 0$ any non-zero ideal 
of $A\times A$. There exists $(b_1,b_2)\in I$, $(b_1,b_2)\neq (0,0)$, and suppose without loss of generality that 
$b_1\neq 0$. Since $Z(A,\cdot)=Z(S_4)=\{1\}$, there exists an element $y\in A$ such that 
$y^{-1}b_1y\neq b_1$. Then, $(b_1,b_2)^{-1}(y,1)^{-1}(b_1,b_2)(y,1)=(b_1^{-1}y^{-1}b_1y,1)=(b_1^{-1}y^{-1}b_1y,1)$
is an element of $I$, so $I\cap (A\times \{0\})$ is a non-zero ideal of $A\times \{0\}$. But 
$A$ is a simple left brace, so $I\cap (A\times \{0\})=A\times\{0\}$, implying $(A\times \{0\})\subseteq I,$
and this gives us the desired result.

Since $A\times A$ has no ideals with trivial structure, we cannot obtain it with 
the methods of Section \ref{extensions}.
\end{remark}

\subsection{New examples by recursion}

In this final section, we show that, given one simple brace of the form given by Theorem \ref{condSimple}, 
we can obtain an infinite number of simple braces 
by a recursion process. As in Theorem \ref{condSimple}, consider a brace with additive group equal to 
$\Z/(p_1)\times(\Z/(p_2))^{n+1}$, and lambda maps given by
$$
\lambda_{(a,\vec{x},x_{n+1})}=\left(\gamma^{q(\vec{x},x_{n+1})}~,~
\begin{pmatrix}
C& 0\\
z& 1
\end{pmatrix}^{a}
\begin{pmatrix}
F^{q(\vec{x},x_{n+1})}& 0\\
(\vec{x},x_n)BF^{q(\vec{x},x_{n+1})}& 1
\end{pmatrix}
\right),
$$
with all the needed conditions for $C$, $F$, etc. to define a simple brace structure. 
Then, for any $k\geq 1$, we can use Theorem \ref{condSimple} to define 
a simple brace structure over $\Z/(p_1)\times (\Z/(p_2))^{k\cdot n+1}$. 

We define the quadratic form 
$$
Q_k(x_1,x_2,\dots,x_{kn}):=\sum_{i=0}^{k-1} Q(x_{ni+1},x_{ni+2},\dots,x_{ni+n}).
$$
For this quadratic form, the associated bilinear form is given in matrix form by
$$
B_k:=
\begin{pmatrix}
B &  & \\
 & \ddots & \\
 & & B
\end{pmatrix}.
$$
Then, using $F$, we can define an element $F_k$ of the orthogonal group defined by $Q_k$
as
$$
F_k:=\begin{pmatrix}
F &  & \\
 & \ddots & \\
 & & F
\end{pmatrix}. 
$$
Moreover, we can define a matrix $C_k$ and a vector $z_k$ in such a way that, together with $F_k$ and $Q_k$, they 
define a structure of simple left brace:
$$
C_k:=\begin{pmatrix}
C &  & \\
 & \ddots & \\
 & & C
\end{pmatrix},
$$
$$
z_k:=(z,~\stackrel{(k}{\dots}~,z).
$$

\section*{Acknowledgments}
I would like to thank Ferran Ced\'o for stimulating discussion about left braces, and for carefully reading the 
first version of this manuscript. 

Research partially supported by the grants DGI MICIIN
MTM2011-28992-C02-01, and MINECO MTM2014-53644-P.

\bibliographystyle{amsplain}
\bibliography{testbib}

\vspace{30pt}
 \noindent \begin{tabular}{llllllll}
 D. Bachiller  \\
 Departament de Matem\`atiques  \\
 Universitat Aut\`onoma de Barcelona   \\
08193 Bellaterra (Barcelona), Spain  \\
dbachiller@mat.uab.cat
\end{tabular}

\end{document}